\newtheorem{theorem}{Theorem}
\newtheorem{definition}[theorem]{Definition}
\newtheorem{proposition}[theorem]{Proposition}
\newtheorem{corollary}[theorem]{Corollary}
\newtheorem{lemma}[theorem]{Lemma}
\newtheorem{conjecture}[theorem]{Conjecture}
\theoremstyle{remark}
\newtheorem{example}[theorem]{Example}
\def\e{\mathrm{e}}
\def\g{\mathrm{g}}
\def\n{\mathrm{n}}
\def\CaN{\mathcal{N}}
\def\CaH{\mathcal{H}}
\def\CaC{\mathcal{C}}
\def\S{\mathcal{S}}
\def\N{\mathbb{N}}
\def\Q{\mathbb{Q}}
\def\Fb{\mathrm{Fb}}
\title{An extension of Wilf's conjecture to affine semigroups}
\date{}
\author{
J. I. Garc\'{\i}a-Garc\'{\i}a
 \footnote{
     Departamento de Matem\'aticas,
     Universidad de C\'adiz, E-11510 Puerto Real  (C\'{a}diz, Spain).
     E-mail: ignacio.garcia@uca.es. Partially supported by
     MTM2014-55367-P and
     Junta de Andaluc\'{\i}a group FQM-366. }\\
D. Mar\'{\i}n-Arag\'on
 \footnote{
     E-mail: daniel.marinaragon@alum.uca.es.
     Partially supported by Junta de Andaluc\'{\i}a group FQM-366.}
     \\
 A. Vigneron-Tenorio
 \footnote{
 	Departamento de Matem\'aticas, Universidad de C\'adiz,
 	E-11406 Jerez de la Frontera (C\'{a}diz, Spain).
     E-mail: alberto.vigneron@uca.es.
     Partially supported by MTM2015-65764-C3-1-P (MINECO/FEDER, UE) and
     Junta de Andaluc\'{\i}a group FQM-366.}}
\begin{document}
\maketitle
\abstract{
	
Let $\CaC\subset \Q^p$ be a rational cone.
An affine semigroup $S\subset \CaC$ is a $\CaC$-semigroup whenever $(\CaC\setminus S)\cap \N^p$ has only a finite number of elements.

In this work, we study the tree of $\CaC$-semigroups,  give a method to generate it and  study their subsemigroups with minimal embedding dimension.
We extend  Wilf's conjecture for numerical semigroups to $\CaC$-semigroups and  give some families of $\CaC$-semigroups fulfilling the extended conjecture.
We also check that other conjectures on numerical semigroups seem to be also satisfied by $\CaC$-semigroups.

}

 \smallskip
 {\small \emph{Keywords:}  affine semigroup, embedding dimension, Frobenius number, genus, semigroup tree, Wilf's conjecture.}

 \smallskip
 {\small \emph{2010 Mathematics Subject Classification:} 20M14, 05A15, 68R05.}

\section*{Introduction}
Let $\N$ denote the set of nonnegative integers.
An affine semigroup is a finitely generated submonoid $S$ of the additive monoid $(\N^p,+)$ with
$p$ a positive integer.
For an affine semigroup its associated rational cone $\CaC$ is the cone $\{\sum_{i=1}^n q_ix_i|n\in \N, q_i\in \Q_+, x_i\in S\}.$
It is well-known that any affine semigroup $S$ has a unique  minimal generating set whose cardinality is known as the embedding dimension of $S,$ and it is denoted by $\e(S).$

We introduce  the concept of $\CaC$-semigroup: given a rational cone $\CaC\subset\Q^p_+,$ an affine semigroup $S$ is called a $\CaC$-semigroup if the set $\CaH(S)=(\CaC\setminus S)\cap\N^p$ is finite.
For instance, the class of  $\N$-semigroups ($\CaC=\N$) is the set of numerical semigroups,
and $\N^p$-semigroups are called generalized numerical semigroups (see \cite{GenSemNp}).

Let $\prec$ be a monomial order satisfying that every monomial is preceded only by a finite number of monomials.
As explained in Section \ref{preliminaries}, every monomial order induces an order on $\N^p$ that we also denote by $\prec$.
Let $\S(\CaC )$ be the set of affine $\CaC$-semigroups.
For every $S\in\S(\CaC)$ the cardinality of $\CaH(S)$ is called the genus of $S$ and it is denoted by $\g(S).$
The maximum of $\CaH(S)$ with respect to $\prec $   is the Frobenius element of $S$, denoted by $\Fb(S)$. By convention,  $\Fb(\CaC)$ is the vector
$(-1,\ldots ,-1)\in \N^p$.
Denote by $\n(S)$ the cardinality of the finite set $\{x\in S \mid x\prec \Fb(S)\}$.
The Frobenius number of a $\CaC$-semigroup $S$ is defined as
$ \n(S)+\g(S)$ and denoted by $\CaN(\Fb(S))$. For numerical semigroups $\Fb(S)=\CaN(\Fb(S))$.

In 1978, Wilf proposed a conjecture related to the Diophantine Frobenius Problem (\cite{Wilf}) that  claims that the inequality
$\n(S)\cdot \e(S)\geq \Fb(S)+1$
is true for every numerical semigroup.
This conjecture still remains open, and it has become an important part of the Theory of Numerical Semigroup, for instance it has been  studied in \cite{Bras2008}, \cite{Fromentin}, \cite{Kaplan}, \cite{Moscariello}, \cite{libro_rosales} and the references therein. Most of these papers describe families of semigroups satisfying Wilf's conjecture.
In \cite{GenSemNp}, it is said it would be interesting to formulate potential extensions of Wilf's conjecture to the setting of $\N^p$-semigroups. One of the contributions of this work is to extend Wilf's conjecture formulating it in terms of $\CaC$-semigroups. Besides, we present several families of $\CaC$-semigroups satisfying this conjecture and the results of the computational tests applied to some $\CaC$-semigroups randomly obtained (see Table \ref{wilf_conj}).

A generating tree for an $\N^p$-semigroup is described in \cite{GenSemNp}, and in Definition \ref{tree} we generalize it for $\CaC$-semigroups. In order to compute this tree, we prove that the algorithm used in \cite{GenSemNp} for $\N^p$-semigroups can be used for $\CaC$-semigroups. Besides, we improve this algorithm making easier to obtain the minimal generating sets of the effective sons of a $\CaC$-semigroup.
Another contribution of this work is to characterize the minimal generating sets of the $\N^p$-semigroups with minimal embedding dimension.
Besides, we conjecture a lower bound of the embedding dimension of a $\CaC$-semigroup.

From the construction of the above trees, we obtain a data table (Table \ref{experiments}) with the amount of $\CaC$-semigroups with a fixed genus $g$, denoted by $n_g(\CaC)$.
The strong growth of $n_g(\CaC)$ makes very difficult a computational study
of $n_g(\CaC)$ in terms of $g$. Note that the easiest semigroup to study is $\N$, and for this semigroup,
it has only been possible to compute $n_g(\N)$ for $g\le 67$ (see \cite{Fromentin}).
These computational results are used to discuss the asymptotic behavior of $n_g(S).$

For this work,
the computations of trees of $\CaC$-semigroups have been done in a cluster of computers (\cite{super}), using python (\cite{python}) as programming language and the library {\tt mpich2-1.2.1} (\cite{mpi}) to parallelize the computations.
For all other computations, we used an Intel i7 with 32 Gb of RAM, and Mathematica (\cite{mathematica}).
We also have used the programs LattE (\cite{latte}) and Normaliz (\cite{normaliz}) for the computation of system of generators and for checking Wilf's conjecture.

The content of this work is organized as follows. Section \ref{preliminaries} sets some basic definitions and results related to affine semigroups.
Section \ref{sec_tree} provides primarily a method to construct the tree of $\CaC$-semigroups.
Section \ref{sec_min_dim_inm} studies the $\N^p$-semigroups with minimal embedding dimension and conjectures a lower bound for the embedding dimension of any $\CaC$-semigroup.
Section \ref{sec_wilf} introduces the extension of Wilf's conjecture  for $\CaC$-semigroups, gives some families of semigroups satisfying it and presents a computational study of this conjecture. The last section shows a data table with the number of $\CaC$-semigroups with a fixed genus and discusses the asymptotic behavior of this number.

\section{Preliminaries and notations}\label{preliminaries}

For any nonnegative integer $n,$ we denote by $[n]$ the set $\{1,\ldots ,n\}.$ The set $\{e_1,\ldots ,e_p\}$ denotes the standard basis of $\N^p.$

A monomial order is a total order on the set of all (monic) monomials in a given polynomial ring, satisfying the following two properties (see \cite{CoxLO}):
\begin{itemize}
\item if $u\preceq v$  and $w$ is any other monomial, then $uw\preceq vw$,
\item if $u$ is any monomial then $1\preceq u$.
\end{itemize}
If we translate these properties to $\N^p$ we obtain that a total order $\prec$ on $\N^p$ is monomial whenever:
\begin{itemize}
\item if $a \preceq b$ and $c\in \N^p$, then $a+c\preceq b+c$,
\item if $c\in\N^p$, then $0\preceq c$.
\end{itemize}
These conditions imply that if $a\preceq b$ and $c\preceq d$, then $a+c\preceq b+c$ and $b+c\preceq b+d$ which, by transitivity, implies $a+c\preceq b+d$. In particular, since $0\preceq c$, if $a\preceq b$, then $a=a+0\preceq  b+c.$ A monomial order can be expressed by a square matrix. For a nonsingular integer ($p\times p$)-matrix $M$ with rows $M_1,\ldots ,M_p,$ the $M$-ordering $\prec$ is defined by $a\prec b$ if and only if there exists an integer $i$ belonging to $[p-1],$ such that $M_1a=M_1b,\ldots ,M_ia=M_ib$ and $M_{i+1}a<M_{i+1}b.$ Every monomial order is equivalent to a matrix ordering.

In this work, for a given numerical semigroup with embedding dimension $2$, we use the following two well-known results (see \cite[Proposition 1.13]{libro_rosales}): if $T=\langle a,b\rangle$ is a numerical semigroup, then
\begin{itemize}
\item the Frobenius number of $T$ is $ab-a-b,$
\item the genus of $T$ is $\frac{ab-a-b+1}{2}.$
\end{itemize}
Another  result we use is that the minimal generating set of the numerical semigroup $[b+1,\infty)\cap \N$ is $\{b+1,\ldots,2b+1\}$ for every $b\in \N$ (see \cite[chapter 1, section 2]{libro_rosales}).

Let $S\subset \N^p$ be an affine semigroup, and $\tau$ be an extremal ray of the cone $\CaC$ associated to $S.$ If $S$ is a $\CaC$-semigroup, the embedding dimension of $S\cap \tau$ is one if and only if $S\cap \tau=\N^p\cap \tau.$ In particular, $S$ has at least two minimal generators in $\tau$ if and only if $S\cap \tau\neq\N^p\cap \tau.$

\section{Computing trees of $\CaC$-semigroups}\label{sec_tree}

Given a minimal generating set of a semigroup $S,$ the idea of the construction of the tree of semigroups is to remove a minimal generator in order to obtain a new semigroup $S'$ such that $S\setminus S'$ is the removed generator (\cite{GenSemNp}). The next lemma shows this construction and
a property which
can be used for improving the computation of the minimal system of generators of $S'.$

\begin{lemma}\label{construccion}
Let $S\subset \N^p$ be a semigroup minimally generated by the set $\{s_1,\ldots ,s_t\},$ $i\in[t]$, and $S'$ be the semigroup generated by $$\{s_1,\ldots ,s_{i-1},s_{i+1},\ldots ,s_t,2s_i,3s_i\}\cup \{s_i+s_j| j\in [t]\setminus \{i\}\}.$$ The following hold:
\begin{itemize}
\item $S'=S\setminus\{s_i\},$
\item the elements in $G=\{s_1,\ldots ,s_{i-1},s_{i+1},\ldots ,s_t\}$ are minimal generators of $S'.$
\end{itemize}
\end{lemma}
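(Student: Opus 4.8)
The plan is to prove the two bullet points separately, but both rest on a single structural observation: since $s_i$ is a minimal generator of $S$, it is an \emph{atom}, i.e. it cannot be written as $a+b$ with $a,b\in S\setminus\{0\}$. I would record at the outset that $S\setminus\{s_i\}$ is itself a submonoid of $\N^p$. Indeed it contains $0$ (as $s_i\neq 0$), and if $a,b\in S\setminus\{s_i\}$ then $a+b\in S$; this sum cannot equal $s_i$, for if $a,b\neq 0$ that would contradict atomicity of $s_i$, while if one of them is $0$ the sum is the other element, which is not $s_i$ by assumption. This preliminary fact does most of the work for the first inclusion.

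For the equality $S'=S\setminus\{s_i\}$ I would prove the two inclusions. For $S'\subseteq S\setminus\{s_i\}$, every listed generator of $S'$ lies in $S$ and is distinct from $s_i$: each $s_j$ with $j\neq i$ is a different minimal generator; $2s_i$ and $3s_i$ differ from $s_i$ because $s_i\neq 0$; and each $s_i+s_j$ differs from $s_i$ because $s_j\neq 0$. Since $S\setminus\{s_i\}$ is closed under addition, the submonoid generated by these elements, namely $S'$, is contained in it.

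For the reverse inclusion, take $x\in S\setminus\{s_i\}$ and fix an expression $x=\sum_{j=1}^t\lambda_j s_j$ with $\lambda_j\in\N$. The argument splits on the coefficient $\lambda_i$. If $\lambda_i=0$, then $x$ is already a combination of the generators $s_j$ ($j\neq i$) of $S'$. If $\lambda_i\geq 2$, I use that every integer $\geq 2$ is a nonnegative combination of $2$ and $3$ (equivalently, $\langle 2,3\rangle$ contains all integers $\geq 2$) to rewrite $\lambda_i s_i$ through $2s_i$ and $3s_i$. The only delicate case is $\lambda_i=1$: here $x\neq s_i$ forces some $\lambda_j\geq 1$ with $j\neq i$, so I can peel off one generator $s_i+s_j$ and express the remainder via the $s_k$ ($k\neq i$). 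In each case $x$ is a nonnegative combination of generators of $S'$, so $x\in S'$. I expect this $\lambda_i=1$ case to be the main obstacle, precisely because it is the only place where one must actively use $x\neq s_i$ together with the presence of the mixed generators $s_i+s_j$ — which is exactly what motivates including them in the generating list.

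Finally, for the second bullet I would show that each $s_j$ with $j\neq i$ remains an atom of $S'$. If $s_j=a+b$ with $a,b\in S'\setminus\{0\}$, then since $S'\subseteq S$ this would express $s_j$ as a sum of two nonzero elements of $S$, contradicting that $s_j$ is a minimal generator of $S$. Hence no $s_j$ ($j\neq i$) is decomposable in $S'$, so each belongs to the unique minimal generating set of $S'$.
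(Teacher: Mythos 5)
Your proof is correct, and on the second bullet it takes a genuinely different (and cleaner) route than the paper. For the equality $S'=S\setminus\{s_i\}$, the paper simply writes ``trivially''; you supply the suppressed content: $S\setminus\{s_i\}$ is a submonoid because $s_i$ is an atom, and the reverse inclusion follows from the case split on $\lambda_i$, where $\lambda_i\geq 2$ is handled by $2s_i,3s_i$ (as $\langle 2,3\rangle$ contains every integer $\geq 2$) and $\lambda_i=1$ by peeling off a mixed generator $s_i+s_j$ --- you correctly identify this last case as the reason those mixed generators must be in the list. For the second bullet, the paper argues by contradiction with an explicit expansion: it writes a supposedly non-minimal element of $G$ as a nonnegative combination of the generators of $S'$, asserts that the coefficients of $2s_i$, $3s_i$ and of the $s_i+s_j$ must all vanish (a step that actually requires a positivity argument, e.g.\ comparing coordinate sums, which the paper leaves unjustified), and then contradicts minimality in $S$. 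You instead observe that $S'\subseteq S$, so any decomposition $s_j=a+b$ with $a,b\in S'\setminus\{0\}$ is already a decomposition into nonzero elements of $S$, contradicting that $s_j$ is an atom of $S$. Your argument buys brevity and generality --- an atom of $S$ lying in any subsemigroup of $S$ remains an atom of that subsemigroup, with no reference to the particular generating set --- and it silently repairs the coefficient-vanishing step the paper glosses over; the paper's expansion, in turn, stays closer to the explicit generating set, which is the form exploited later when computing the effective sons in the semigroup tree.
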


\begin{proof}
Trivially $S'=S\setminus\{s_i\}.$

Without loss of generality, we suppose that $s_t\in G$ is not a minimal generator of $S'$, so there exist $\lambda_1,\ldots ,\lambda_{t-1},\mu_1,\mu_2,\nu_1,\ldots ,\nu_{t-1}\in \N,$ such that
$$s_t=\sum_{j\in [t-1]} \lambda_js_j+2\mu_1s_t+3\mu_2s_t+\sum_{j\in [t-1]} \nu_j(s_t+s_j).$$
Hence, $\nu_j$ has to be zero for all $j,$ $\mu_1=\mu_2=0,$ and then $s_t=\sum_{j\in [t-1]} \lambda_js_j.$ In that case, $s_t$ is not a minimal generator of $S.$ We conclude that for any $s\in G,$ $s$ is a minimal generator of $S'.$
\end{proof}

A semigroup $S'$, obtained from $S$ by using the previous construction,  is called a descendant of $S.$

\begin{corollary}\label{coro_descendientes}
Let $S$ be a $\CaC$-semigroup with embedding dimension $t$. Then, $\e(S')\ge t-1$ for any  descendant $S'$  of $S.$
\end{corollary}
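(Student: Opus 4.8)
The plan is to read this off directly from Lemma~\ref{construccion} together with the uniqueness of the minimal generating set of an affine semigroup noted in the Introduction. First I would fix the data: since $S$ has embedding dimension $t$, it has a minimal generating set $\{s_1,\ldots,s_t\}$, and by definition any descendant $S'$ of $S$ is obtained by applying the construction of Lemma~\ref{construccion} for some index $i\in[t]$, so that $S'=S\setminus\{s_i\}$. In particular $S'$ is again a finitely generated submonoid of $\N^p$, hence an affine semigroup possessing a unique minimal generating set whose cardinality is $\e(S')$.

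The core step is to exhibit $t-1$ distinct elements that are forced to lie in this minimal generating set. Here I would simply invoke the second assertion of Lemma~\ref{construccion}, which states that every element of
$$G=\{s_1,\ldots,s_{i-1},s_{i+1},\ldots,s_t\}$$
is a minimal generator of $S'$. Since $G$ is obtained from the minimal generating set of $S$ by deleting a single element, its $t-1$ members are pairwise distinct.

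To finish, I would use that the minimal generating set of an affine semigroup is unique and consists precisely of its irreducible (minimal) elements. Consequently every minimal generator of $S'$ belongs to this unique set, so it contains at least the $t-1$ distinct elements of $G$. Taking cardinalities yields $\e(S')\ge t-1$, as claimed.

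I do not expect a genuine obstacle here, since the substantive work has already been carried out in Lemma~\ref{construccion}; the statement is essentially a bookkeeping corollary. The only points requiring a word of care are recording that the elements of $G$ are distinct and explicitly appealing to the uniqueness of the minimal generating set, which is exactly what converts the phrase ``the elements of $G$ are minimal generators'' into the numerical lower bound on $\e(S')$.
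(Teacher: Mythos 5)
Your proposal is correct and follows exactly the route the paper intends: the corollary is stated without proof precisely because it is an immediate consequence of the second assertion of Lemma~\ref{construccion}, namely that the $t-1$ distinct elements of $G$ remain minimal generators of the descendant $S'$. Your added remarks on the distinctness of the elements of $G$ and the uniqueness of the minimal generating set are the right bookkeeping details and introduce nothing beyond the paper's implicit argument.
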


For every $S\in \S(\CaC)$, let $\{s_1 \prec \dots \prec s_t\}$ be the minimal system of generators of $S$ and let  $r$ be the minimum element such that $\Fb(S)\prec s_r\prec \dots \prec s_t$. The sets $S\setminus\{s_r\},\dots,S\setminus\{s_t\}$ are elements of $\S(\CaC)$, we call these semigroups the effective sons of $S$ and denote them by ${\frak F}(S).$
Note that the minimal generating sets of its effective sons can be computed in an easier way by using Lemma \ref{construccion}.
If $S\setminus \{s_i\}$ is an effective son of $S,$ the elements in the set $\{s_1,\ldots ,s_{i-1},s_{i+1},\ldots ,s_t\}$ are minimal generators of $S\setminus \{s_i\}.$ In order to obtain the other minimal generators of $S\setminus \{s_i\},$ we only have to get the minimal generators belonging to $\{2s_i,3s_i\}\cup \{s_i+s_j| j\in [t]\setminus \{i\}\}.$

\begin{definition}\label{tree}
The tree ${\cal  T}$ of $\CaC$-semigroups rooted in $\CaC$ is the tree with the set of vertices obtained recursively as follows:
\begin{enumerate}
	\item let $i=0$ and ${\tt L}_0=\{\CaC\},$
	\item $i=i+1,$
	\item define ${\tt L}_i=\cup_{ S\in {\tt L}_{i-1} } {\frak F}(S),$
	\item go to step 2.
\end{enumerate}
A pair of vertices $(S,S')$ is an edge if and only if
$S'$ is an effective son of $S$.
\end{definition}

The Frobenius element can be used to compute the tree of $\CaC$-semigroups rooted in a cone $\CaC$ in the same way as \cite{GenSemNp}, and this fact is proved in the following proposition.

\begin{proposition}
	The set ${\cal T}$ is a tree and its set of vertices is $\S(\CaC)$.
\end{proposition}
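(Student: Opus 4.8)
The plan is to establish three things: that every vertex produced by the recursion of Definition \ref{tree} lies in $\S(\CaC)$, that conversely every $\CaC$-semigroup occurs as a vertex, and that the resulting graph is a tree, i.e.\ that each vertex other than the root $\CaC$ has a unique parent. The first point I would handle by induction on the level ${\tt L}_i$: the root has $\CaH(\CaC)=\emptyset$, and if $S\in\S(\CaC)$ then by Lemma \ref{construccion} each effective son equals $S\setminus\{s_i\}$ for a minimal generator $s_i$ with $\Fb(S)\prec s_i$, so $\CaH(S\setminus\{s_i\})=\CaH(S)\cup\{s_i\}$ remains finite; hence the son is again a $\CaC$-semigroup, and since $s_i\succ\max_{\prec}\CaH(S)$ the removed generator becomes its Frobenius element.

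For surjectivity I would argue by induction on the genus $\g(S)$. If $\g(S)=0$ then $\CaH(S)=\emptyset$ and $S$ is the root $\CaC$. If $\g(S)=g\ge 1$, set $\hat S=S\cup\{\Fb(S)\}$; the claim is that $\hat S\in\S(\CaC)$, that $\g(\hat S)=g-1$, and that $S$ is an effective son of $\hat S$. Granting this, the induction hypothesis places $\hat S$ in ${\cal T}$, and then $S$ enters ${\cal T}$ as one of its effective sons.

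The heart of the argument, and the step I expect to require the most care, is showing that $\hat S$ is genuinely an affine $\CaC$-semigroup having $\Fb(S)$ as a minimal generator lying above its Frobenius element. I would use the monomial order throughout: for any nonzero $x\in\hat S$ one has $\Fb(S)+x\succ\Fb(S)=\max_{\prec}\CaH(S)$, so $\Fb(S)+x$ is not a gap of $S$ and, being an element of $\CaC\cap\N^p$, lies in $S\subseteq\hat S$; together with $0\in\hat S$ this shows $\hat S$ is closed under addition, hence a submonoid, and it is finitely generated because $\CaH(\hat S)=\CaH(S)\setminus\{\Fb(S)\}$ is finite. The same order relation shows $\Fb(S)$ is a minimal generator: if $\Fb(S)=a+b$ with $a,b\in\hat S\setminus\{0\}$ then $a,b\prec\Fb(S)$, so $a,b\in S$ and thus $\Fb(S)=a+b\in S$, contradicting that $\Fb(S)$ is a gap of $S$. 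Finally $\Fb(\hat S)=\max_{\prec}\CaH(\hat S)\prec\Fb(S)$, so removing $\Fb(S)$ from $\hat S$ is an effective-son operation and $S\in{\frak F}(\hat S)$.

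It remains to confirm the tree structure. Connectedness is exactly the path produced by the surjectivity induction, which climbs from $S$ to the root by successively adjoining Frobenius elements. For uniqueness of the parent, suppose $S$ is an effective son of some $\hat S$, say $S=\hat S\setminus\{s_i\}$ with $s_i$ a minimal generator of $\hat S$ and $\Fb(\hat S)\prec s_i$. Then $\CaH(S)=\CaH(\hat S)\cup\{s_i\}$ and, since $s_i\succ\max_{\prec}\CaH(\hat S)$, we get $s_i=\max_{\prec}\CaH(S)=\Fb(S)$; hence $\hat S=S\cup\{\Fb(S)\}$ is forced. Thus every non-root vertex has exactly one parent, so ${\cal T}$ is a tree with vertex set $\S(\CaC)$.
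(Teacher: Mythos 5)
Your proof is correct and follows essentially the same route as the paper's: uniqueness of the parent because any effective son $S'$ forces its parent to be $S'\cup\{\Fb(S')\}$, and surjectivity by climbing from any $S\in\S(\CaC)$ to the root through successive adjunction of Frobenius elements. You additionally supply the closure-under-addition, minimal-generator and ordering verifications that the paper leaves implicit; the one loose justification (deducing finite generation of $\hat S$ from finiteness of $\CaH(\hat S)$) is more directly settled by observing that $\hat S$ is generated by the generators of $S$ together with $\Fb(S)$.
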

\begin{proof}
If $S'\in {\frak F}(S)$, then $S\setminus S'=\{ \Fb(S) \}$, and, therefore, if $S'\in {\frak F}(S_1)$ and $S'\in {\frak F}(S_2)$, then $S_1=S_2$.
Thus, ${\cal T}$ is a tree.

Let $S\in \S(\CaC)$. The set $S'=S\cup \{\Fb(S)\}$ is an element of $\S(\CaC)$ and $S\in {\frak F}(S')$. If repeat this process, after a finite number of steps, we obtain the set $\CaC$. Thus, $S$ is in the set of vertices of  ${\cal T}$.
\end{proof}

\section{Minimal embedding dimension of $\CaC$-semigroups}\label{sec_min_dim_inm}

We prove that the minimal embedding dimension of an  $\N^p$-semigroup is equal to  $2p$.
In general,
for $\CaC$-semigroups,
we propose a conjecture for a lower bound of the embedding dimension. Recall that a descendant semigroup has embedding dimension greater than or equal to ``its father's" embedding dimension minus 1 (Corollary \ref{coro_descendientes}).

An explicit description of the minimal generating set of an $\N^p$-semigroup with $\e(S)=2p$ is given in the following result.

\begin{proposition}\label{mult_2p}
Let $S$ be an $\N^p$-semigroup with embedding dimension $2p.$ Then, there exist $i\in [p],$
$\lambda_1,\lambda_2\in \N$ with $\gcd(\lambda_1,\lambda_2)=1,$ and $\{q_j\}_{j\in [p]\setminus \{i\}}\subset \N$
such that
$$\{e_1,\ldots ,e_{i-1},e_{i+1},\ldots ,e_p,\lambda_1 e_i,\lambda_2 e_i\}\cup \{e_i+q_je_j| j\in [p]\setminus \{i\}\}$$
is the minimal generating set of $S.$
\end{proposition}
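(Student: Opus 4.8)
The plan is to argue by induction on $p$, proving the lower bound $\e(S)\ge 2p$ and the stated structure simultaneously, and exploiting the fact that every coordinate‑hyperplane section of $S$ is again a generalized numerical semigroup of one lower dimension. The base case $p=1$ is the classical fact, recalled in Section~\ref{preliminaries}, that a numerical semigroup of embedding dimension $2$ is $\langle\lambda_1,\lambda_2\rangle$ with $\gcd(\lambda_1,\lambda_2)=1$. Two elementary reductions come first. For $k\in[p]$ let $\tau_k=\N e_k$. Since any representation of a point of $\tau_k$ as a sum of elements of $S$ uses only points of $\tau_k$, the minimal generators of $S$ on $\tau_k$ are exactly those of the numerical semigroup $S\cap\tau_k$; writing $m_k$ for their number, the last remark of Section~\ref{preliminaries} gives $m_k\ge 1$, with $m_k=1$ iff $e_k\in S$. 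Next, finiteness of $\CaH(S)$ forces $\pi_k(S)=\N$ for the $k$‑th coordinate projection, for if some $a$ were missing the whole infinite slice $\{x\in\N^p\mid x_k=a\}$ would lie in $\CaH(S)$. Hence $1\in\pi_k(S)$, and as $\pi_k(S)$ is generated by the $k$‑th coordinates of the minimal generators, some minimal generator has $k$‑th coordinate $1$; when $e_k\notin S$ all generators on $\tau_k$ exceed $1$, so this generator is off‑axis, i.e. its support (its set of nonzero coordinates) has at least two elements.

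For the inductive step, fix $k$ and set $S_k=S\cap\{x_k=0\}$, a generalized numerical semigroup in the hyperplane $\N^{p-1}$ whose minimal generators are precisely the minimal generators of $S$ with vanishing $k$‑th coordinate; thus $\e(S)=\e(S_k)+d_k$, where $d_k$ counts the minimal generators of $S$ meeting coordinate $k$. Put $A=\{k\mid e_k\notin S\}$; as $\e(S)=2p>p$ we have $S\neq\N^p$ and $A\neq\emptyset$. The inductive lower bound gives $\e(S_k)\ge 2(p-1)$ whenever $S_k\neq\N^{p-1}$, so $d_k\le 2$ for such $k$; but for $k\in A$ the first paragraph yields $d_k=m_k+(\text{off-axis generators through }k)\ge 2+1$, a contradiction. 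Hence $S_k=\N^{p-1}$ for every $k\in A$, which forces $e_j\in S$ for all $j\neq k$, i.e. $A\subseteq\{k\}$; since this holds for each $k\in A$ and $A\neq\emptyset$, there is a single distinguished index $A=\{i\}$. Then $S_i=\N^{p-1}$, so $\e(S_i)=p-1$ and $d_i=p+1$. Counting all generators, $\e(S)=m_i+(p-1)+r$ where $r$ is the number of off‑axis generators, whence $m_i+r=p+1=d_i=m_i+o_i$, giving $o_i=r$: \emph{every} off‑axis generator meets the distinguished axis.

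To finish I would combine connectivity with the inductive structure. Being a proper cofinite subsemigroup, $S$ has a connected support graph on $[p]$ (otherwise $S$ factors as a direct product across a coordinate partition and cannot be cofinite). As all off‑axis generators contain $i$ in their support, each $k\neq i$ must be met by some off‑axis generator, so $1\le o_k\le d_k-1\le 1$; hence $o_k=1$, $d_k=2$, and $\e(S_k)=2(p-1)$ for all $k\neq i$. The induction hypothesis now applies to each such minimal $S_k$: its distinguished coordinate is again $i$, it carries exactly two generators $\lambda_1e_i,\lambda_2e_i$ on $\tau_i$ with $\gcd(\lambda_1,\lambda_2)=1$, and all its off‑axis generators have the form $e_i+q_je_j$. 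Therefore $m_i=2$ with coprime coefficients, and any off‑axis generator of $S$ that misses some coordinate $k\neq i$ is of the form $e_i+q_je_j$; a generator of full support would meet every $k\neq i$, forcing $r=1$ and leaving each $S_k$ with no off‑axis generators, impossible for a proper $\N^{p-1}$‑semigroup. Since $o_k=1$ for every $k\neq i$, there is exactly one generator $e_i+q_je_j$ per index $j\neq i$, producing the asserted list of $2p$ generators.

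The routine part is the generator bookkeeping; the genuine difficulty is the cofiniteness input. The two delicate points are that finiteness of $\CaH(S)$ permits only one axis with $e_k\notin S$ and excludes \emph{large} off‑axis generators (those with $i$‑th coordinate $\ge 2$ or support of size $\ge 3$). I handle both uniformly through the hyperplane restriction $S\mapsto S_k$ together with connectivity of the support graph, which is where the main obstacle lies; the small cases $p\le 2$, in which off‑axis generators are necessarily of full support, must be verified directly (there $\pi_i(S)=\N$ alone forces the unique off‑axis generator to have $i$‑th coordinate equal to $1$).
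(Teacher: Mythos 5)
Your route is genuinely different from the paper's. The paper argues directly in dimension $p$: it first shows some $e_j$ must lie in $S$ (otherwise all $2p$ generators sit on the axes with coefficients $\ge 2$ and $e_1+\nu e_2$ is a gap for every $\nu$), then, for every missing axis $k$ and every present axis $m$, it exhibits a minimal generator of the form $e_k+q e_m$ (take $q$ minimal with $e_k+qe_m\in S$), and finally the count $j+2(p-j)+j(p-j)\le 2p$ forces $j=p-1$, i.e.\ exactly one missing axis. You instead induct on the dimension via the hyperplane sections $S_k=S\cap\{x_k=0\}$, the identity $\e(S)=\e(S_k)+d_k$, and connectivity of the support graph of the generators. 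Your individual steps are sound: the generators of $S_k$ are exactly the generators of $S$ in that hyperplane; $\pi_k(S)=\N$ by cofiniteness, producing an off-axis minimal generator with $k$-th coordinate $1$ whenever $e_k\notin S$; the factorization argument behind connectivity is valid; and the exclusion of full-support generators (with $p\le 2$ checked separately, as you note) is correct.

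There is, however, a genuine gap: your induction does not close as written. The inductive step assumes $\e(S)=2p$ throughout and derives only the structure statement, yet it leans essentially on the \emph{lower bound} in dimension $p-1$ (``$\e(S_k)\ge 2(p-1)$ whenever $S_k\ne\N^{p-1}$''). That bound is not part of the proposition being proved --- in the paper it is a separate theorem, proved \emph{after} and \emph{by means of} this proposition via an induction on the genus --- so your announced plan of proving ``the lower bound and the structure simultaneously'' obliges you to also derive the lower bound in dimension $p$ inside the inductive step, and that derivation appears nowhere in the proposal; without it, the hypothesis you invoke at level $p-1$ is unjustified at the next level. Fortunately the gap is fillable with exactly your tools: suppose $S\ne\N^p$ and $\e(S)\le 2p-1$, and pick $k$ with $e_k\notin S$. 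If $S_k\ne\N^{p-1}$, then $\e(S)=\e(S_k)+d_k\ge 2(p-1)+3=2p+1$, a contradiction (here $d_k\ge m_k+1\ge 3$ by your first paragraph); so $S_k=\N^{p-1}$ and $k$ is the only missing axis. Then for any $j\ne k$ the section $S_j$ is proper (it misses $e_k$), so $d_j=\e(S)-\e(S_j)\le (2p-1)-2(p-1)=1$, while connectivity forces $o_j\ge 1$ and hence $d_j=1+o_j\ge 2$ --- a contradiction, proving $\e(S)\ge 2p$. Adding this paragraph (and the $p\le 2$ verification you promised) makes your argument a complete proof, structurally independent of the paper's.
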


\begin{proof}
Since $\e(S)=2p,$ there exists $j\in [p]$ such that $e_j\in S,$ otherwise there exist some integers
$\lambda_1^1,\ldots, \lambda_1^p, \lambda_2^1,\ldots, \lambda_2^p>1$ such that $S$ is minimally generated by $\{\lambda_1^1e_1,\ldots, \lambda_1^pe_p, \lambda_2^1e_1,\ldots, \lambda_2^pe_p\},$ but then $e_1+\nu e_2\in \N^p\setminus S$ for all $\nu \in \N$, and, therefore,  $S$ is not an $\N^p$-semigroup. So, we can assume that
$$G_1=\{e_1,\ldots, e_j,\lambda_1^{j+1}e_{j+1},\ldots,\lambda_1^pe_p,
\lambda_2^{j+1}e_{j+1},\ldots,\lambda_2^pe_p\}$$
is a subset of the minimal system of generators of $S$ for some $\lambda_i^j\in\N.$ Therefore, from the construction described in Lemma \ref{construccion}, there exist $q_1^{j+1},\ldots ,q_1^{p}, \ldots ,q_j^{j+1},\ldots ,q_j^{p} \in \N,$ such that the elements in $\cup_{_k=j+1}^p\{ e_k+q_1^{k}e_1,\ldots ,e_k+q_j^{k}e_j\}$ are in the set of minimal generators of $S.$ Thus, for $\e(S)=2p,$ $j$ has to be equal to $p-1,$ that is, only one canonical generator has been removed to construct $S.$ Assuming that this canonical generator is $e_i,$ $S$ is minimally generated by
$$\{e_1,\ldots ,e_{i-1},e_{i+1},\ldots ,e_p,\lambda_1 e_i,\lambda_2 e_i\}\cup \{e_i+q_je_j| j\in [p]\setminus \{i\}\}$$
with $\lambda_1,\lambda_2\in \N$ satisfying  $\gcd(\lambda_1,\lambda_2)=1,$ and $\{q_j\}_{j\in [p]\setminus \{i\}}\subset \N.$
\end{proof}

Two examples of $\N^3$-semigroups with minimal embedding dimension are:
$$\begin{array}{c}
\N^3\setminus \{e_1,e_1+e_3,e_1+2e_3,e_1+3e_3\}=\langle 2e_1,3e_1,e_2,e_3,e_1+2e_3,e_1+4e_2\rangle,\\
\N^3\setminus \{e_3,3e_3,e_3+e_2,e_3+e_1\}=\langle e_1,e_2,2e_3,5e_3,e_3+2e_2,e_3+2e_1\rangle.
\end{array}$$

The above result provides us a method for getting semigroups with fixed genus and minimal embedding dimension.

\begin{corollary}\label{easy_2p}
For any nonnegative integer $h$ and $i,k\in [p]$ with $i\neq k,$ the semigroup generated by
$$\{e_1,\ldots ,e_{i-1},e_{i+1},\ldots ,e_p,2e_i,3e_i,e_i+he_k\}\cup \{e_i+e_j| j\in [p]\setminus \{i,k\}\}$$
is an $\N^p$-semigroup with genus $h$ and embedding dimension $2p.$
\end{corollary}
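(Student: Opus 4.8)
The plan is to pin down the semigroup $S$ as an explicit subset of $\N^p$: once $S$ is known, finiteness of the complement (hence the $\N^p$-semigroup property) and the genus are immediate, and the embedding dimension follows from a short irredundancy check. Write a point of $\N^p$ as $x=\sum_{j=1}^p x_je_j$ and call $x_i$ its $i$-th coordinate.

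First I would read off membership from the $i$-th coordinate. The generators contributing to coordinate $i$ are $2e_i,3e_i$ (contributing $2$ and $3$, with all other coordinates zero) together with $e_i+he_k$ and the $e_i+e_j$ ($j\in[p]\setminus\{i,k\}$), each contributing $1$. Since $\langle 2,3\rangle=\N\setminus\{1\}$, every $x$ with $x_i=0$ lies in $S$ (use the generators $e_l$, $l\neq i$), and every $x$ with $x_i\ge 2$ lies in $S$ as well (realize $x_ie_i$ from $2e_i,3e_i$ and fill the remaining coordinates with the $e_l$). Thus the complement sits inside the slice $x_i=1$, where exactly one generator of $i$-coordinate $1$ may be used and no copy of $2e_i,3e_i$: the choice $e_i+he_k$ is available iff $x_k\ge h$, and some choice $e_i+e_j$ iff $x_j\ge1$ for some $j\neq i,k$. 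Hence an $x$ with $x_i=1$ misses $S$ precisely when $x_k\le h-1$ and $x_j=0$ for all $j\notin\{i,k\}$.

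This yields $\CaH(S)=\{\,e_i+c\,e_k \mid 0\le c\le h-1\,\}$, a set of exactly $h$ points; in particular it is finite, so $S$ is an $\N^p$-semigroup, and $\g(S)=h$. For the embedding dimension I would note that the displayed set is precisely the form prescribed by Proposition \ref{mult_2p}, with $\lambda_1=2$, $\lambda_2=3$ (so $\gcd(\lambda_1,\lambda_2)=1$), $q_k=h$, and $q_j=1$ for $j\neq i,k$; it has $(p-1)+2+1+(p-2)=2p$ elements, giving $\e(S)\le 2p$. To obtain equality (for $h\ge1$) I would verify irredundancy directly with the same bookkeeping: a generator $e_j$ ($j\neq i$) is not a sum of the others, since any combination with $i$-coordinate $0$ uses only the $e_l$ ($l\neq i$) and would have to equal $e_j$; the generators $2e_i$ and $3e_i$ cannot be reproduced, because reaching $i$-coordinate $2$ or $3$ without the generator itself forces at least one generator of $i$-coordinate $1$, each of which carries a positive coordinate outside $i$, whereas $2e_i,3e_i$ are supported only on $i$; and each of $e_i+he_k$, $e_i+e_j$ is forced, since any representation must use a single generator of $i$-coordinate $1$ whose off-$i$ part is coordinatewise below the target, and for $h\ge1$ the only such generator is the one being tested. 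Hence all $2p$ generators are minimal and $\e(S)=2p$.

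The membership analysis is routine coordinate accounting; the step deserving care is the irredundancy of $2e_i$ and $3e_i$, which rests on $e_i\notin S$ and on $e_i+he_k$ being distinct from $e_i$. This is exactly where $h\ge1$ is needed: when $h=0$ one has $e_i+he_k=e_i$, the generating set collapses, and $S=\N^p$ has embedding dimension $p$, so the embedding-dimension claim should be read for $h\ge1$ (the genus statement, $\g(S)=h=0$, remaining correct in that degenerate case).
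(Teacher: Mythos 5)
Your proof is correct, and it supplies something the paper itself does not: the paper states this corollary with no proof at all, presenting it as an immediate consequence of Proposition \ref{mult_2p} (``The above result provides us a method for getting semigroups with fixed genus and minimal embedding dimension''). Strictly speaking, Proposition \ref{mult_2p} only gives a \emph{necessary} form for the minimal generating set of an $\N^p$-semigroup already known to have $\e(S)=2p$; it does not assert the converse, namely that a set of this form is irredundant and generates a semigroup of genus $h$. Your argument fills exactly that gap: the coordinate-accounting determination of membership (complement contained in the slice $x_i=1$, giving $\CaH(S)=\{e_i+c\,e_k \mid 0\le c\le h-1\}$, hence finiteness and $\g(S)=h$) and the direct irredundancy check of all $2p$ generators constitute the verification the paper leaves implicit. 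You also correctly identify a genuine imprecision in the statement: for $h=0$ the generator $e_i+he_k$ equals $e_i$, the semigroup is all of $\N^p$, and the embedding dimension is $p$, not $2p$; so the embedding-dimension claim requires $h\ge 1$ (consistent with the related lemma in Section \ref{sec_wilf}, which assumes $h>1$), while the genus claim survives the degenerate case. This is a sharper reading of the statement than the paper itself offers.
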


In the following result, we give a lower bound of the embedding dimension of $\N^p$-semigroups.

\begin{theorem}
Let $p\in\N\setminus\{0\}$. If $S$ is an $\N^p$-semigroup with $S\neq \{0\}$ and $S\neq \N^p$, then $\e(S)\geq 2p$.
\end{theorem}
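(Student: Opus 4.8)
The plan is to split the minimal generators of $S$ into those lying on an extremal ray of the cone $\CaC$ and a collection of ``mixed'' generators, and to bound the two groups separately. The extremal rays are $\tau_1,\ldots,\tau_p$, where $\tau_k$ is generated by $e_k$, so that $\N^p\cap\tau_k=\N e_k$. Set $J=\{k\in[p]:e_k\notin S\}$ and $I=[p]\setminus J$. Since $S\subseteq\N^p$ and $S\neq\N^p$, not all the $e_k$ can lie in $S$ (otherwise $S\supseteq\langle e_1,\ldots,e_p\rangle=\N^p$), so $J\neq\emptyset$. First I would count the ray generators. For $k\in I$ the vector $e_k$ has coordinate-sum $1$, hence is not a sum of two nonzero elements of $\N^p$; so $e_k$ is a minimal generator of $S$, and every $ce_k$ with $c\ge 2$ is reducible, making $e_k$ the unique minimal generator of $S$ on $\tau_k$. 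For $k\in J$ we have $S\cap\tau_k\neq\N^p\cap\tau_k$, so the ray result quoted in Section~\ref{preliminaries} gives at least two minimal generators of $S$ on $\tau_k$. This produces at least $|I|+2|J|=p+|J|$ minimal generators, all of support size one.

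The heart of the argument is to produce $|I|$ further minimal generators of support size two, disjoint from the list above. Fix one index $i\in J$. For each $j\in I$ consider the line $\{e_i+ke_j:k\ge 0\}$; since $S$ is cofinite in $\N^p$ this line meets $S$, so I can define $k_j=\min\{k\ge 0:e_i+ke_j\in S\}$, with $k_j\ge 1$ because $e_i\notin S$. I claim $g_j:=e_i+k_je_j$ is a minimal generator. Indeed, if $g_j=a+b$ with $a,b\in S\setminus\{0\}$, then the $i$-th coordinates give $a_i+b_i=1$, say $a_i=1$ and $b_i=0$, while the coordinates outside $\{i,j\}$ vanish in both $a$ and $b$; hence $a=e_i+a_je_j$ and $b=b_je_j$ with $b_j\ge 1$, so $a=e_i+(k_j-b_j)e_j\in S$ with $k_j-b_j<k_j$, contradicting the minimality of $k_j$. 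Thus each $g_j$ is a minimal generator with support exactly $\{i,j\}$.

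Finally I would assemble the count. The $g_j$ have pairwise distinct supports $\{i,j\}$, so they are distinct from one another, and each has support of size two, so none of them is a ray generator. Adjoining them to the ray generators yields $\e(S)\ge(p+|J|)+|I|=2p$, as required. I expect the main obstacle to be the middle step: recognizing that the exactly $2p-(p+|J|)=|I|$ extra generators one needs are the ``first lifts'' $e_i+k_je_j$ of the missing point $e_i$ along each axis $e_j$ with $e_j\in S$, and verifying their irreducibility cleanly from the defining minimality of $k_j$. The boundary cases are harmless: if $I=\emptyset$ the rays already contribute $2p$ generators (this also settles $p=1$, i.e.\ the numerical case), and the excluded value $S=\{0\}$ cannot arise here since $\{0\}$ is not cofinite in $\N^p$.
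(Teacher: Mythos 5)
Your proof is correct, and it takes a genuinely different route from the paper. The paper argues by induction on the genus, walking down the tree of $\N^p$-semigroups built in Section~\ref{sec_tree}: the base case is the explicit description of the genus-$1$ semigroups, and the inductive step combines Corollary~\ref{coro_descendientes} (a descendant loses at most one generator) with the structure theorem for the critical case $\e(S)=2p$ (Proposition~\ref{mult_2p}), checking for each type of removed minimal generator that the descendant still has at least $2p$ generators. You instead give a direct, induction-free count: the rays contribute $|I|+2|J|=p+|J|$ minimal generators (one generator $e_k$ for each $k$ with $e_k\in S$, at least two on each ray with $e_k\notin S$, by the fact quoted in Section~\ref{preliminaries}), and then, fixing a single missing index $i\in J$, you manufacture $|I|$ further minimal generators $g_j=e_i+k_je_j$ as the first points of $S$ on the lines $e_i+\N e_j$; your irreducibility argument for $g_j$ (coordinates outside $\{i,j\}$ vanish, the $i$-th coordinate forces the splitting $a=e_i+a_je_j$, $b=b_je_j$, and minimality of $k_j$ is violated) is sound, and the support-size bookkeeping makes all $2p$ generators distinct, with the degenerate case $I=\emptyset$ handled separately. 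Your approach buys self-containedness and an explicit exhibition of $2p$ minimal generators in any such $S$, with no dependence on the tree machinery or on Proposition~\ref{mult_2p}; the paper's approach buys integration with the rest of the paper, since the tree and the structure of the $\e(S)=2p$ case are developed there anyway, and the induction displays how embedding dimension propagates to descendants, which is the theme of Section~\ref{sec_min_dim_inm}.
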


\begin{proof}
Note that any generalized numerical semigroup with genus 1 is minimally generated by a set of the form
$$\{e_1,\ldots ,e_{i-1},e_{i+1},\ldots ,e_p,2e_i,3e_i\}\cup \{e_i+e_j| j\in [p]\setminus \{i\}\}.$$
So, for genus $1$, the result holds.
For any semigroup $S$ with genus greater than or equal to 1, there exists $i\in [p]$ such that $e_i\notin S$. Thus, there exist elements in the minimal system of generators of $S$ of the form $q_1e_i,q_2e_i$ with $q_1,q_2\in\N$.

Assume that the result is fulfilled for a fixed genus $h\in\N,$ and let $S$ be an $\N^p$-semigroup with such genus. So, $\e(S)\ge 2p$, and in case $\e(S)> 2p,$ by Corollary \ref{coro_descendientes}, the descendants of $S$ have embedding dimension greater than or equal to $2p$.

Suppose now that $\e(S)=2p,$ and assume, without loss of generality, the removed canonical generator is $e_p.$ By Proposition \ref{mult_2p},
$$G=\{e_1,\ldots ,e_{p-1},\lambda _1 e_p,\lambda_2 e_p,e_p+q_1e_1,\ldots ,e_p+q_{p-1}e_{p-1}\}$$
is the minimal generating set of $S.$ Now, we apply the construction of Lemma \ref{construccion} to obtain the descendants of $S.$ If we remove a canonical generator $e_i$ belonging to $G,$ the minimal generating set of the corresponding descendant semigroup contains the minimal generators
$$\{e_1,\ldots e_{i-1},e_{i+1},\ldots,e_{p-1},2e_i,3e_i,\lambda _1 e_p,\lambda_2 e_p,e_p+q_1e_1,\ldots ,e_p+q_{p-1}e_{p-1}\};$$ thus, the embedding dimensions of the descendant semigroups obtained in this way are greater than or equal to $2p.$ Similarly, since the semigroup $S\cap \{x_1=\cdots =x_{p-1}=0\}$ does not contain the canonical generator $e_p,$ if we remove the minimal generator $\lambda_je_p,$ the embedding dimensions of its descendants are greater than or equal to $2p.$ To finish the proof, we remove from $G$ a generator of the form $e_p+q_{j}e_{j}.$ Assume that we remove  $e_p+q_{p-1}e_{p-1}.$ The corresponding descendant semigroup contains the minimal generators
\begin{multline}
\{e_1,\ldots e_{i-1},e_{i+1},\ldots,e_{p-1},\lambda _1 e_p,\\
\lambda_2 e_p,e_p+q_1e_1,\ldots ,e_p+q_{p-2}e_{p-2},e_p+(q_{p-1}+1)e_{p-1}\}.
\end{multline}
So, we conclude that the embedding dimensions of the $\N^p$-semigroups with genus $h+1$ are greater than or equal to $2p.$
\end{proof}

To finish this section, we propose a lower bound for the embedding dimension of $\CaC$-semigroups where the cone $\CaC$ is not necessarily the positive hyperoctant ($\Q^k_+$ with $k\in\N$).

\begin{conjecture}
Let $\CaC$ be an integer cone such that the dimension of the real vector space generated by $\CaC$ is $p.$ The embedding dimension of every $\CaC$-semigroup is greater than or equal to $2p$.
\end{conjecture}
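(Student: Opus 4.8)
The natural plan is to reduce the lower bound to a count of minimal generators attached to the extremal rays of $\CaC$, generalising what happens in the $\N^p$ case. Let $\tau_1,\dots,\tau_r$ be the extremal rays of $\CaC$, so that $r\geq p$ with equality exactly when $\CaC$ is simplicial, and let $S$ be a $\CaC$-semigroup with $S\neq\{0\}$ and $S\neq\CaC\cap\N^p$ (as in the $\N^p$ theorem the cone itself must be excluded, since the cone semigroup $\CaC\cap\N^p$ may have embedding dimension below $2p$ --- already $\N^p$ has embedding dimension $p$). By the criterion recorded in Section~\ref{preliminaries}, each ray $\tau_k$ falls into one of two classes: either $S\cap\tau_k=\N^p\cap\tau_k$, a \emph{full} ray contributing a single minimal generator in $\tau_k$, or $S\cap\tau_k\neq\N^p\cap\tau_k$, a \emph{deficient} ray contributing at least two. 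Writing $a$ for the number of full rays, the minimal generators lying on the rays already number at least $a+2(r-a)=2r-a$. Since these are distinct from any generator lying in no ray, it therefore suffices to exhibit $a$ further minimal generators off the rays: that would give a total of at least $2r-a+a=2r\geq 2p$.

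So the whole problem collapses to producing $a$ minimal generators of $S$ that lie in none of the extremal rays. First I would isolate the mechanism forcing such generators in the $\N^p$ proof: there a full ray is an axis $\mathbb{R}_+e_j$ with $e_j\in S$, and finiteness of $\CaH(S)$ forces, for the deficient axis $\mathbb{R}_+e_i$, a minimal generator $e_i+q_je_j$ (with $q_j$ the least value making $e_i+q_je_j\in S$, a coordinate argument shows it is indecomposable). These $p-1$ hybrids are exactly the compensation, and $a=p-1$ there, so the count closes at $2p$. I would try to abstract this into a lemma: for every full ray $\tau_j$, finiteness of $\CaH(S)$ forces a minimal generator of the shape $v_j+(\text{correction})$ lying off all rays, and these can be chosen distinct as $\tau_j$ ranges over the full rays.

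The hard part is precisely this compensation step for a general cone. In the $\N^p$ case one exploits the coordinate structure: full rays are coordinate axes, the lattice $\N^p$ is generated by the primitive ray vectors, and the forced hybrids are transparently minimal and distinct. For an arbitrary integer cone neither feature persists: the primitive vectors $v_1,\dots,v_r$ need not generate $\CaC\cap\N^p$ (the cone may be far from unimodular), so the local structure of $S$ near a full ray is governed by the whole Hilbert basis of $\CaC$ rather than by a single generator, and ``off-ray'' minimal generators may already be present in $\CaC\cap\N^p$ itself. Establishing that each full ray still forces its own distinct off-ray minimal generator of $S$, uniformly over all cones, is the essential obstacle, and is what keeps the statement at the level of a conjecture.

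As a complementary route I would attempt the same induction on the genus $\g(S)$ that proves the $\N^p$ theorem: the base case treats the effective sons of $\CaC$, and in the inductive step Corollary~\ref{coro_descendientes} disposes of every father with $\e(S)>2p$, leaving only fathers with $\e(S)=2p$ whose descendants must be controlled directly. But this step needs an explicit description of the $\CaC$-semigroups with $\e(S)=2p$ --- the exact analogue of Proposition~\ref{mult_2p} --- and proving such a structure theorem for arbitrary cones runs into the same difficulty as the compensation argument above. Thus both approaches converge on the same missing ingredient: a cone-independent understanding of how minimal generators are forced off the extremal rays once a ray is saturated by $S$.
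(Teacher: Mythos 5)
The statement you were asked to prove is a \emph{conjecture}: the paper contains no proof of it, only the special case $\CaC=\Q_+^p$ (the theorem on $\N^p$-semigroups in Section~\ref{sec_min_dim_inm}) together with computational evidence (Table~\ref{tabla_minima_dimension_inmersion}, and the semigroups behind Tables~\ref{wilf_conj} and~\ref{experiments}), and the authors explicitly leave open even the question of when the bound $2p$ is attained. So there is no proof in the paper to compare yours against, and a blind attempt that claimed to settle the statement would have been suspect; yours does not make that claim, and the partial reasoning it does contain is sound. The ray-counting reduction is correct: by the fact recorded in Section~\ref{preliminaries}, a ray $\tau$ with $S\cap\tau=\N^p\cap\tau$ carries exactly one minimal generator of $S$ (the primitive vector of $\tau$, which is indecomposable because an extremal ray is a face of $\CaC$), a deficient ray carries at least two, and generators on distinct rays or off all rays are pairwise distinct. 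Your observation that the conjecture must implicitly exclude $S=\CaC\cap\N^p$ itself (just as the $\N^p$ theorem excludes $\N^p$) is also a correct and necessary reading. In fact your count yields an unconditional partial result that the paper does not state: if \emph{every} extremal ray is deficient, then $\e(S)\geq 2r\geq 2p$, so the conjecture holds for that family, and for simplicial cones the whole problem reduces to producing one off-ray minimal generator per saturated ray.

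Your diagnosis of the obstruction is likewise accurate and matches why the statement remains open. Both proofs in Section~\ref{sec_min_dim_inm} --- the structure result of Proposition~\ref{mult_2p} and the genus induction using Corollary~\ref{coro_descendientes} --- exploit the fact that the primitive ray vectors $e_1,\dots,e_p$ generate the ambient monoid $\N^p$, so saturating all rays forces $S=\N^p$, and a deficient axis always exists to pair with each full axis to produce the hybrid generators $e_i+q_je_j$. For a general integer cone the Hilbert basis of $\CaC\cap\N^p$ is strictly larger than the set of primitive ray vectors, all rays can be saturated while gaps persist in the interior, and no analogue of Proposition~\ref{mult_2p} is available; this is precisely the missing compensation lemma you isolate. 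In short: your proposal is not a proof, but no proof exists in the paper either, and everything you actually assert is correct.
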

An open question that arises from the above conjecture is when the bound $2p$ is reached.

\begin{example}
Let $\CaC$ be the rational cone with extremal rays $(3,1)$ and $(1,2).$
The integer cone $\CaC\cap \N^2$ has embedding dimension 4, and
the minimal embedding dimension of its sons  is showed in
Table \ref{tabla_minima_dimension_inmersion} up to genus $15$.
\begin{table}[h]
\centering
{\tiny\begin{tabular}{|c|c|c|c|c|c|c|c|c|c|c|c|c|c|c|c|c|c|c|c|}
  \hline
  genus & 0 & 1 & 2 & 3 & 4 & 5 & 6 & 7 & 8 & 9 & 10 & 11 & 12 & 13 & 14 & 15  \\ \hline
  min. $\e(\bullet)$ & 4 & 5 & 4 & 5 & 4 & 4 & 4 & 5 & 4 & 5 & 4 & 5 & 4 & 5 & 4 & 4\\
  \hline
\end{tabular}}
\caption{Computing minimal embedding dimension.}\label{tabla_minima_dimension_inmersion}
\end{table}
\end{example}

All the $\CaC$-semigroups computed to make Table \ref{wilf_conj} and Table \ref{experiments} satisfy the above conjecture.

\section{Extension of Wilf's conjecture}\label{sec_wilf}

Our goal in this section is to extend  Wilf's conjecture to $\CaC$-semigroups and  prove that this conjecture holds for several families of semigroups.
For this purpose, monomial orders satisfying that any monomial is preceded only by a finite number of other monomials are considered. A matrix ordering satisfies this property if all its entries in the first row of the matrix are positive. In the sequel, $\prec$ denotes a fixed weight order verifying the previous property, and we assume that $p\in \N$ is greater than or equal to two.

\begin{conjecture}
Let $S\subset \N^p$ be a $\CaC$-semigroup. The extended Wilf's conjecture is formulated as
\begin{equation}
\n(S)\cdot \e(S)\geq \CaN(\Fb(S))+1.
\end{equation}
\end{conjecture}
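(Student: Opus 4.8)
The full conjecture cannot be proved outright: specializing to $p=1$ and $\CaC=\N$ recovers the classical Wilf inequality for numerical semigroups, which is still open. So the realistic plan is to establish the inequality for structured families of $\CaC$-semigroups, exactly as the abstract promises, and the sketch below is organized toward that goal. The first step I would take is to rewrite the target inequality in additive form. Since $\CaN(\Fb(S))=\n(S)+\g(S)$, the conjectured bound $\n(S)\cdot\e(S)\ge \CaN(\Fb(S))+1$ is equivalent to
$$\n(S)\bigl(\e(S)-1\bigr)\ge \g(S)+1.$$
This is the form I would work with throughout, because it isolates the genus $\g(S)=|\CaH(S)|$ on the right and the product of the two ``Wilf data'' on the left.

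Second, I would record the counting interpretation of the right-hand side. The points of $\CaC\cap\N^p$ lying weakly below $\Fb(S)$ split into the $\n(S)$ semigroup elements $\{x\in S\mid x\prec\Fb(S)\}$ and the $\g(S)$ gaps of $\CaH(S)$ (every gap is $\preceq\Fb(S)$ by definition of the Frobenius element). Hence $\CaN(\Fb(S))=|\{x\in\CaC\cap\N^p\mid x\preceq\Fb(S)\}|$, which is why $\CaN(\Fb(S))$ deserves to be called a Frobenius \emph{number}. Having both sides expressed as cardinalities of explicit finite subsets of the cone is what makes a counting/injection argument possible.

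Third comes the main engine. I would try to build, for each gap, a witness inside the set of small semigroup elements, refined by a generator. Concretely, let $N=\{x\in S\mid x\prec\Fb(S)\}$ and let $\{s_1,\dots,s_t\}$ be the minimal generators. The idea is to send a gap $h\in\CaH(S)$ to a pair $(x,s_i)$ with $x\in N$ and $h=x+s_i$ (or a bounded variant), and to show this map is injective up to a controlled, $\e(S)$-fold overcounting; summing over the $\e(S)-1$ nontrivial generator choices then yields the additive inequality above. For concrete families I would start with the minimal-embedding-dimension semigroups of Proposition \ref{mult_2p}, whose generators are given explicitly: there $\e(S)=2p$ and $\CaH(S)$ is essentially a one-dimensional string along a single ray, so $\n(S)$, $\g(S)$ and $\CaN(\Fb(S))$ can be computed or tightly bounded by hand and $2p\cdot\n(S)\ge\CaN(\Fb(S))+1$ checked directly. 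A second, softer family uses the embedding-dimension bound $\e(S)\ge 2p$ --- a theorem for $\N^p$-semigroups and a conjecture for general $\CaC$: for $p\ge2$ it gives $\e(S)-1\ge3$, so it suffices to verify $3\,\n(S)\ge\g(S)+1$, that is, that small semigroup elements are at least a fixed fraction of the gaps --- a condition one can guarantee whenever $S$ is ``dense'' below its Frobenius element.

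The hard part is the same obstruction that keeps classical Wilf open, now compounded by geometry. The injection in the third step can fail to be injective, or fail to reach every gap, precisely when the minimal generators are large relative to $\Fb(S)$ in the order $\prec$, so that some gaps cannot be written as (small element) $+$ (generator); controlling this requires Apéry-type information about $S$ along each extremal ray of $\CaC$, and the monomial order $\prec$ (constrained only to have a positive first row) makes the bookkeeping of ``what lies below $\Fb(S)$'' genuinely multidimensional rather than linear. For this reason I would not expect a uniform proof; instead I would aim to enlarge the list of families --- minimal embedding dimension, small genus, and dense semigroups --- while leaving the general statement as the conjecture it is.
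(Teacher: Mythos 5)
Your proposal is correct in its key recognition --- this statement is a conjecture that cannot be proved outright, since it extends the open classical Wilf inequality --- and your strategy of rewriting the bound via $\CaN(\Fb(S))=\n(S)+\g(S)$ and then verifying it for explicit families (notably the minimal-embedding-dimension semigroups of Proposition \ref{mult_2p}, where gaps form a string along one ray and $\n(S)\ge\g(S)$ gives the inequality at once) is essentially the same approach the paper takes in the lemmas of Section \ref{sec_wilf}. The only discrepancy is an off-by-one issue inherited from the paper itself: the introduction defines $\CaN(\Fb(S))=\n(S)+\g(S)$, while the paper's lemma proofs use $\CaN(\Fb(S))+1=\n(S)+\g(S)$, so your additive form $\n(S)\bigl(\e(S)-1\bigr)\ge\g(S)+1$ is shifted by one relative to the inequality $\n(S)\bigl(\e(S)-1\bigr)\ge\g(S)$ that the paper actually checks.
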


The above conjecture depends on the fixed monomial order, but according to our computational experiments, the conjecture seems true for every monomial order (Table \ref{wilf_conj}).
Below, we prove that it holds for some families of semigroups with any monomial order.

We start our study with a family of $\N^p$-semigroups with minimal embedding dimension.

\begin{lemma}
Let $h$ be an integer number greater than 1, $k\in [p-1],$ $\prec$ be a
monomial ordering in $\N^p$ as above,  and $S\subset \N^p$ be the semigroup minimally generated by
$$\{e_1,\ldots ,e_{p-1},2e_p,3e_p,e_p+he_k\}\cup \{e_p+e_i| i\in[p-1]\setminus \{k\}\}.$$
The $\N^p$-semigroup $S$ satisfies the extended Wilf's conjecture.
\end{lemma}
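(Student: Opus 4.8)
The plan is to compute each of the three quantities $\n(S)$, $\e(S)$, and $\CaN(\Fb(S))$ explicitly for this semigroup and then verify the inequality $\n(S)\cdot\e(S)\geq \CaN(\Fb(S))+1$ directly. The embedding dimension is immediate from the hypothesis: the generating set listed has exactly $2p$ elements (namely $e_1,\dots,e_{p-1}$, then $2e_p,3e_p$, then $e_p+he_k$, then the $p-2$ elements $e_p+e_i$ for $i\in[p-1]\setminus\{k\}$), so $\e(S)=2p$. Thus the whole problem reduces to understanding the gap set $\CaH(S)=(\N^p\setminus S)$, from which I read off both the genus $\g(S)$ and the Frobenius element $\Fb(S)$, and to counting $\n(S)$.

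First I would identify the gaps. Since every $e_i$ with $i\in[p-1]$ is a generator, the only ``missing direction'' is $e_p$: the coordinate $x_p$ is the source of all gaps. Concretely, a point fails to lie in $S$ only through its behaviour in the $e_p$-direction combined with whether enough of the cross-terms $e_p+he_k$ and $e_p+e_i$ are available. Because $2e_p,3e_p\in S$ we have $me_p\in S$ for all $m\neq 1$, so the pure multiples contribute only the single gap $e_p$. The remaining gaps come from points with $x_p=1$: such a point $e_p+v$ (with $v\in\N^{p-1}$) lies in $S$ precisely when $v$ can be written using the available cross-terms, i.e.\ $v$ lies in the numerical/affine structure generated by $he_k$ together with $\{e_i: i\in[p-1]\setminus\{k\}\}$ inside $\N^{p-1}$. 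I expect the gaps to be exactly $\{e_p\}\cup\{e_p+te_k: 1\le t\le h-1\}$, giving $\g(S)=h$, and $\Fb(S)=e_p+(h-1)e_k$ as the $\prec$-largest of these (one must check the order places it last, which holds because $\prec$ has positive first row so larger $e_p$- and $e_k$-components push elements later). I would verify this gap description carefully, as it is the crux.

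Next I would count $\n(S)=\#\{x\in S: x\prec\Fb(S)\}$ and compute $\CaN(\Fb(S))=\n(S)+\g(S)$. The definition of $\CaN$ means the inequality to prove becomes $\n(S)\cdot 2p \ge \n(S)+h+1$, equivalently $\n(S)\cdot(2p-1)\ge h+1$. So the entire content is a lower bound on $\n(S)$. I would argue that the elements $0,e_1,e_2,\dots,e_{p-1}$ and the small multiples $2e_p,\dots$ together with the cross terms all precede $\Fb(S)$, and more to the point that $S$ contains a long initial segment under $\prec$; in particular, since $he_k\in S$ and $\N^{p-1}$-part is ``full,'' there are at least order-$h$ semigroup elements below $\Fb(S)$. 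The cleanest route is to exhibit enough explicit elements of $S$ preceding $\Fb(S)$ to force $\n(S)$ large, then observe $2p-1\ge 3$ (as $p\ge 2$) so even a modest bound on $\n(S)$ suffices.

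The main obstacle I anticipate is the dependence on the unspecified monomial order $\prec$: the value of $\n(S)$, and even which element is $\Fb(S)$, is order-sensitive, so I cannot simply count lattice points in a box. The safe strategy is to prove the inequality in an order-robust way. Since $\CaN(\Fb(S))=\n(S)+\g(S)=\n(S)+h$, the claim $\n(S)\cdot 2p\ge \n(S)+h+1$ holds as soon as $\n(S)(2p-1)\ge h+1$; because $p\ge2$ gives $2p-1\ge3$, it suffices to show $\n(S)\ge \tfrac{h+1}{3}$, or more comfortably $\n(S)\ge h$ if I can establish it. I would therefore focus on a clean lower bound for $\n(S)$ valid for \emph{every} admissible $\prec$: each gap $e_p+te_k$ with $0<t<h-1$ forces the companion element obtained by adding a generator (which lands in $S$ and still precedes $\Fb(S)=e_p+(h-1)e_k$ whenever the added vector is $\prec$-small) to be counted, yielding enough elements. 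Pinning down this counting argument uniformly in $\prec$ is the delicate step; everything else is bookkeeping.
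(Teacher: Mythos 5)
Your setup agrees with the paper's: you correctly identify the gap set $\{e_p,e_p+e_k,\dots,e_p+(h-1)e_k\}$, hence $\g(S)=h$ and $\Fb(S)=e_p+(h-1)e_k$, you get $\e(S)=2p$, and you correctly reduce the conjecture to an order-uniform lower bound on $\n(S)$. (Your target inequality $\n(S)(2p-1)\ge h+1$ differs by one from the paper's $\n(S)(2p-1)\ge h$; this traces back to the paper's own inconsistency between its definition of $\CaN(\Fb(S))$ and its use of it, and is immaterial, since both follow from $\n(S)\ge h$.)

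However, the crux --- that lower bound --- is exactly what you leave open, and the mechanism you sketch for it would fail. You propose pairing each gap $e_p+te_k$ with a ``companion'' in $S$ obtained by adding a generator. But adding $e_k$ to such a gap produces another gap; and the genuine companions in $S$, such as $e_p+te_k+e_j$ with $j\in[p-1]\setminus\{k\}$, or $3e_p+te_k$, or $2e_p+(t+h)e_k$, need not precede $\Fb(S)$: for a weight order whose first row gives coordinates $j\neq k$ and $p$ weights much larger than $h$ times the weight of coordinate $k$, every one of these companions has weight exceeding that of $e_p+(h-1)e_k$, so none of them is counted by $\n(S)$. There is no choice of ``added generator'' that works uniformly in $\prec$, so your hedge ``whenever the added vector is $\prec$-small'' cannot be discharged. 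The fix is the observation you gesture at and then abandon: the elements $0,e_k,2e_k,\dots,(h-1)e_k$ themselves lie in $S$ (because $e_k$, with $k\in[p-1]$, is a minimal generator), and each precedes $\Fb(S)$ for \emph{every} admissible monomial order, since $te_k\preceq (h-1)e_k\prec (h-1)e_k+e_p$ by the two monomial-order axioms. This gives $\n(S)\ge h$ at once, whence $\n(S)(2p-1)\ge 3h\ge h+1$ as $p\ge 2$. That one-line count of elements of $S$ (rather than any perturbation of the gaps) is precisely the paper's proof of the delicate step you flag.
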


\begin{proof}
	
The set of gaps of $S$ is equal to $\{e_p,e_p+e_k,\dots,e_p+(h-1)e_k\}$, the genus is $h$, the embedding dimension is $2p$, and the Frobenius element is $e_p+(h-1)e_k$.

Since $\n(S) \e(S)= 2p \n(S)$  and  $\CaN(\Fb(S))+1=\n(S)+\g(S)=\n(S)+h$, the inequality
$ \n(S)\e(S)\geq \CaN(\Fb(S)) +1$ is equivalent to $\n(S) (2p-1)\geq h$.
We have that $0\prec e_k\prec \dots\prec (h-1)e_k\prec \Fb(S)$ and
$\{0,e_k,\dots,(h-1)e_k\}\subset S$. Therefore, $\n(S)\geq h$, so $\n(S) (2p-1)\geq h$ for every $p\geq 2$.
\end{proof}

We can prove the extended Wilf's conjecture for other families of $\CaC$-semigroups.

\begin{lemma}
Let $q$ be a nonzero nonnegative integer, $S\subset \N^p$ be the affine semigroup $\N^p\setminus \{e_j,\ldots ,(q-1)e_j\}$ with $j\in[p].$ Then, $S$ satisfies the extended Wilf's conjecture for every monomial order $\prec.$
\end{lemma}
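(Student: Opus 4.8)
The plan is to read off the four relevant quantities $\g(S)$, $\Fb(S)$, $\e(S)$ and a lower bound for $\n(S)$ directly from the explicit description $S=\N^p\setminus\{e_j,2e_j,\ldots,(q-1)e_j\}$, and then check the numerical inequality. A point $x\in\N^p$ fails to lie in $S$ exactly when all coordinates other than the $j$-th vanish and $1\le x_j\le q-1$, so $\CaH(S)=\{e_j,\ldots,(q-1)e_j\}$; hence $\g(S)=q-1$, and since $\prec$ is a monomial order the chain $e_j\prec 2e_j\prec\cdots\prec(q-1)e_j$ forces $\Fb(S)=(q-1)e_j$. (The case $q=1$ gives $S=\N^p$ and is excluded, so I assume $q\ge 2$.)

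The heart of the argument is the computation of $\e(S)$. The off-axis weight $w(x)=\sum_{i\ne j}x_i$ is additive under the semigroup operation, so any decomposition of an axis element $me_j$ uses only axis elements; consequently the minimal generators lying on the ray $\N e_j$ coincide with those of the numerical semigroup $\{0,qe_j,(q+1)e_j,\dots\}$, which by the cited result are $qe_j,(q+1)e_j,\ldots,(2q-1)e_j$, giving $q$ generators. For the remaining generators I would first show that any $x\in S$ with $w(x)\ge 2$ decomposes — split the off-axis part into two nonzero pieces and place the whole $j$-coordinate on one of them — so every other minimal generator has $w(x)=1$, i.e. is of the form $me_j+e_i$ with $i\ne j$ and $m\ge 0$. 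A short inspection of the decompositions $me_j+e_i=(m_1e_j+e_i)+m_2e_j$ with $m_1+m_2=m$ shows that a nontrivial one exists iff some summand $m_2e_j$ lies in $S$, i.e. iff $m\ge q$; hence $me_j+e_i$ is a minimal generator exactly for $0\le m\le q-1$. This contributes $q$ generators in each of the $p-1$ off-axis directions, so $\e(S)=q+(p-1)q=pq$.

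For the final ingredient it suffices to note that $0\in S$ and $0\prec\Fb(S)$ (as $\Fb(S)\ne 0$), whence $\n(S)\ge 1$. Substituting $\e(S)=pq$ and $\g(S)=q-1$, the extended conjecture $\n(S)\,\e(S)\ge\CaN(\Fb(S))+1$ becomes $\n(S)\,pq\ge \n(S)+q$, i.e. $\n(S)(pq-1)\ge q$. Since $\n(S)\ge 1$ and $pq-1\ge q$ (equivalently $q(p-1)\ge 1$, which holds for $p\ge 2$), the inequality follows.

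The main obstacle is the exact determination of the minimal generating set, in particular proving that $me_j+e_i$ ceases to be a minimal generator precisely at $m=q$; the rest is routine arithmetic. It is worth stressing how slack the bound is — only $\n(S)\ge 1$ is used — so for the inequality alone the weaker estimate $\e(S)\ge q+1$ (the $q$ ray generators together with a single $e_i$) already suffices, and one could bypass the full off-axis analysis if a shorter proof is preferred.
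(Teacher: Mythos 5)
Your proof is correct and takes essentially the same route as the paper: both identify the same minimal generating set (giving $\e(S)=pq$), the same gap set (so $\g(S)=q-1$ and $\Fb(S)=(q-1)e_j$ for every monomial order), and then reduce the conjecture to an inequality that holds using only the trivial bound $\n(S)\ge 1$. The only differences are cosmetic: you actually justify the minimal generating set via the off-axis weight argument (the paper merely asserts it), and your reduction $\n(S)(pq-1)\ge q$ handles the ``$+1$'' in $\CaN(\Fb(S))+1=\n(S)+\g(S)+1$ more carefully than the paper's $\n(S)(pq-1)\ge q-1$; both versions of the inequality hold, so nothing is affected.
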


\begin{proof}
Without loss of generality, assume that $j=1.$ Note that the semigroup $S$ is minimally generated by the set
$$
(\cup_{i=q}^{2q-1}\{ie_1\})
\cup
(\cup _{i\in [p]\setminus\{1\}}\{e_i\})
\cup
(\cup _{i\in [p]\setminus\{1\}}\{e_i+e_1,\ldots ,e_i+(q-1)e_1\}).
$$
So, $\e(S)= q+p-1+(p-1)(q-1)=pq$ and $\n(S)\e(S)=\n(S)pq.$
Since $\Fb(S)=(q-1)e_1$ for any monomial order $\prec,$  and $\g(S)=q-1,$ $\CaN(\Fb(S))+1=\n(S)+\g(S)=\n(S)+q-1.$
Thus,   $\n(S)\e(S)\geq \CaN(\Fb(S))+1$ if and only if  $\n(S)(pq-1)\geq q-1$, which is true.
\end{proof}

\begin{lemma}
Let $T\subset \N$ be the numerical semigroup minimally generated by $\{\lambda_1,\lambda_2\},$
$j\in [p]$, and $\{q_i\mid i \in [p]\setminus\{j\}\}\subset \N$. The affine semigroup $S=\N^p\setminus \{(x_1,\ldots ,x_p)\in\N^p| x_j\notin T\text{ and } x_i< q_i,\, \forall i\in [p]\setminus \{j\} \}$ satisfies the extended Wilf's conjecture for every monomial order $\prec.$
\end{lemma}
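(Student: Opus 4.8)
The plan is to reduce the extended Wilf inequality to the single estimate $\n(S)\ge\g(S)$, after which the bound $\e(S)\ge 2p$ from the Theorem finishes everything. Since $\CaN(\Fb(S))=\n(S)+\g(S)$, the claim $\n(S)\e(S)\ge\CaN(\Fb(S))+1$ is equivalent to $\n(S)(\e(S)-1)\ge\g(S)+1$, so I would first record $\g(S)$. Assuming the nondegenerate case (all $q_i\ge 1$, so $S\ne\N^p$), the gap set is $\CaH(S)=\{x:x_j\notin T,\ 0\le x_i\le q_i-1\ (i\ne j)\}$; writing $F=\lambda_1\lambda_2-\lambda_1-\lambda_2$ for the Frobenius number of $T$ and $Q=\prod_{i\ne j}q_i$, a direct count gives $\g(S)=\g(T)\,Q=\tfrac{F+1}{2}\,Q\ge 1$, using $\g(T)=\tfrac{F+1}{2}$ from the Preliminaries.

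Next I would pin down the Frobenius element, which I expect to be order-independent. Consider the corner $w=(q_1-1,\dots,F,\dots,q_p-1)$, with $F$ in the $j$-th slot. I would check that $w$ is itself a gap and that every gap is coordinatewise $\le w$; since any monomial order is compatible with the coordinatewise partial order, this forces $h\preceq w$ for all $h\in\CaH(S)$ and hence $\Fb(S)=w$ for every admissible $\prec$.

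The heart of the argument is then to prove $\n(S)\ge\g(S)$. I would work inside the box $B=\{x\in\N^p:x_i\le w_i\ \forall i\}$. Because $w$ dominates every gap, $\CaH(S)\subseteq B$ and $B=(S\cap B)\sqcup\CaH(S)$; and since $x_i<q_i$ for all $i\ne j$ throughout $B$, membership in $S$ there is governed solely by $x_j\in T$. Counting then yields $|S\cap B|=|B|-\g(S)=(F+1)Q-\tfrac{F+1}{2}Q=\g(S)$. Every point of $S\cap B$ is $\le w$ coordinatewise and distinct from $w$ (as $w\notin S$), hence $\prec\Fb(S)$, so all $\g(S)$ of them are counted by $\n(S)$, giving $\n(S)\ge\g(S)$.

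Finally I would combine: $\e(S)\ge 2p\ge 4$ gives $\e(S)-1\ge 3$, so $\n(S)(\e(S)-1)\ge 3\g(S)\ge\g(S)+1$, valid for every monomial order. I expect the main obstacle to be the exact balance $|S\cap B|=\g(S)$: this is where the symmetry of the $2$-generated (hence symmetric) semigroup $T$ enters, through the identity $|T\cap[0,F]|=(F+1)-\g(T)=\tfrac{F+1}{2}=\g(T)$, which makes the $S$-points and the gaps inside $B$ cancel precisely. The degenerate case $q_i=0$ (where $\CaH(S)=\emptyset$ and $S=\N^p$) falls outside the conjecture and can be set aside.
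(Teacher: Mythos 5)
Your proof is correct and shares its skeleton with the paper's: both confine the gaps to the box $L=[0,\lambda_1\lambda_2-\lambda_1-\lambda_2]\times\prod_{i\neq j}[0,q_i-1]$, compute $\g(S)=\frac{\lambda_1\lambda_2-\lambda_1-\lambda_2+1}{2}\prod_{i\neq j}q_i$ from the genus formula for $\langle \lambda_1,\lambda_2\rangle$, identify $\Fb(S)$ as the top corner of $L$ independently of the monomial order, and obtain $\n(S)\geq\g(S)$ because exactly half of the points of $L$ lie in $S$ and each of them precedes $\Fb(S)$. The one genuine difference is how you bound $\e(S)$: the paper writes down the minimal generating set of $S$ explicitly and gets the exact value $\e(S)=p+1+(p-1)\frac{\lambda_1\lambda_2-\lambda_1-\lambda_2+1}{2}$, after which the coefficient $\e(S)-1\geq 1$ already closes the inequality $\n(S)\left(\e(S)-1\right)\geq \g(S)$; you bypass that computation by invoking the earlier theorem that any proper $\N^p$-semigroup has $\e(S)\geq 2p$ (legitimately available at this point, with no circularity), paying only the extra and easily verified requirement $\g(S)\geq 1$. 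This buys a shorter argument and in fact a slightly stronger conclusion: you establish $\n(S)\e(S)\geq \n(S)+\g(S)+1$, i.e.\ the conjecture under the Introduction's literal definition $\CaN(\Fb(S))=\n(S)+\g(S)$, whereas the paper's proof (like its other lemmas) uses the convention $\CaN(\Fb(S))+1=\n(S)+\g(S)$, so your bound implies theirs. What the paper's route buys in exchange is the explicit description of the minimal generating set, which has independent interest; your added justifications (compatibility of monomial orders with coordinatewise domination to pin down $\Fb(S)$, and the aside on the degenerate case $q_i=0$) fill in steps the paper leaves implicit.
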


\begin{proof}
Assume that $j=1.$
Let $U$ be the set $\N\setminus T=[1, \lambda_1\lambda_2-\lambda_1-\lambda_2]\setminus T.$ So, the cardinality of $U$ is $\frac{\lambda_1\lambda_2-\lambda_1-\lambda_2+1}{2}.$
Note that $S$ is minimally generated by the set
$$
(\{\lambda_1e_1,\lambda_2e_1\})
\cup
(\cup_{i\in [p]\setminus\{1\}} \{e_i\})
\cup
(\cup _{\alpha\in U}\{\alpha+q_2e_2,\ldots ,\alpha+q_pe_p \})
.
$$
Therefore, $\e(S)=2+p-1+(p-1)\frac{\lambda_1\lambda_2-\lambda_1-\lambda_2+1}{2}=
p+1+(p-1)\frac{\lambda_1\lambda_2-\lambda_1-\lambda_2+1}{2}.$

The other elements involved in the extended Wilf's conjecture are now determined. The set $\N^p\setminus S$ is included in the hypercube
$ L=[0,\lambda_1\lambda_2-\lambda_1-\lambda_2]\times [0,q_2-1]\times \dots \times  [0,q_p-1]$.
Since every numerical semigroup generated by two elements is symmetric,
this set contains
as many points inside of $S$ as outside.
There are just $\frac{\lambda_1\lambda_2-\lambda_1-\lambda_2+1}{2}\prod_{i=2}^pq_i$ points in $\N^p\setminus S,$ and $\g(S)= \frac{\lambda_1\lambda_2-\lambda_1-\lambda_2+1}{2} \prod_{i=2}^pq_i.$
For every monomial order, the Frobenius element is $(\lambda_1\lambda_2-\lambda_1-\lambda_2)e_1+\sum_{i=2}^p(q_i-1)e_i$. Therefore,  $\CaN(\Fb(S))+1=\n(S)+\g(S)=\n(S)+\frac{\lambda_1\lambda_2-\lambda_1-\lambda_2+1}{2} \prod_{i=2}^pq_i,$ and $\alpha \prec \Fb(S)$ for all $\alpha \in L.$

So, for these semigroups the extended Wilf's conjecture can be formulated as:
$$\n(S)\left(p+(p-1)\frac{\lambda_1\lambda_2-\lambda_1-\lambda_2+1}{2} \right)\ge \frac{\lambda_1\lambda_2-\lambda_1-\lambda_2+1}{2} \prod_{i=2}^pq_i.
$$
Since $\n(S)$ is greater than $\frac{\lambda_1\lambda_2-\lambda_1-\lambda_2+1}{2} \prod_{i=2}^pq_i$ and $p+(p-1)\frac{\lambda_1\lambda_2-\lambda_1-\lambda_2+1}{2}\ge 1,$
the inequality holds.
\end{proof}

\begin{lemma}
Let $a$ and $b$ two nonnegative integers such that $a<b,$ and $\CaC\subset \N^2$ be the cone generated by $\{(1,0),(1,1)\}.$ The $\CaC$-semigroup $S =\langle (1,1)\rangle \cup (\CaC\setminus [0,b]\times [0,a])$ satisfies the extended Wilf's conjecture for every fixed monomial order $\prec.$
\end{lemma}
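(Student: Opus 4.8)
The plan is to read off the four quantities in the extended conjecture directly from the combinatorial shape of $S$ and then reduce the inequality to an elementary estimate. First note that the cone is $\CaC=\{(x_1,x_2)\in\Q^2\mid 0\le x_2\le x_1\}$, so a point of $\N^2$ lies in $\CaH(S)$ exactly when it is in the cone, inside the box $[0,b]\times[0,a]$, and off the diagonal (the diagonal $\langle(1,1)\rangle$ having been added back). Hence
$$\CaH(S)=\{(x_1,x_2)\mid 0\le x_2\le a,\ x_2<x_1\le b\}.$$
Summing over the rows $x_2=j$, each of which contributes $b-j$ gaps, gives $\g(S)=\sum_{j=0}^{a}(b-j)=\frac{(a+1)(2b-a)}{2}$. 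Since the fixed order is a weight order whose first row $(w_1,w_2)$ has $w_1,w_2>0$, every gap satisfies $w_1x_1+w_2x_2\le w_1 b+w_2 a$ with equality only at $(b,a)$, which is itself a gap; therefore $\Fb(S)=(b,a)$ for every admissible order.

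The bulk of the work is the embedding dimension, which I would obtain by sorting the minimal generators by their second coordinate. On the diagonal ray $\tau$ one has $S\cap\tau=\N^2\cap\tau=\{(m,m)\mid m\ge0\}$, so by the ray remark of Section~\ref{preliminaries} this ray contributes the single generator $(1,1)$, which is clearly indecomposable since any splitting within the cone uses points not in $S$. On the extremal ray $x_2=0$, $S$ meets it in $\{0\}\cup\{(x_1,0)\mid x_1\ge b+1\}$, which as a numerical semigroup is $\{0\}\cup([b+1,\infty)\cap\N)$; by the quoted description of $[b+1,\infty)\cap\N$ its minimal generators are $(b+1,0),\dots,(2b+1,0)$, giving $b+1$ generators. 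For an off-diagonal generator with $x_2\ge1$ I would use $s=(x_1,x_2)=(1,1)+(x_1-1,x_2-1)$, observing that the summand $(x_1-1,x_2-1)$ already lies in $S$ whenever $x_1\ge b+2$ or $x_2\ge a+2$; hence every minimal generator of this type has $x_1\le b+1$ and $x_2\le a+1$, and membership in $S$ then forces $x_1=b+1$ or $x_2=a+1$. This isolates the $L$-shaped set formed by the column $x_1=b+1$ ($1\le x_2\le a+1$) and the row $x_2=a+1$ ($a+2\le x_1\le b+1$), totalling $b$ points after counting the shared corner once.

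The delicate step, and the one I expect to be the main obstacle, is verifying that each of these $b$ candidates is genuinely indecomposable. I would argue by contradiction: if such an $s$ with $x_1\le b+1$, $x_2\le a+1$ splits as $u+v$ with $u,v\in S\setminus\{0\}$, then a diagonal summand $(m,m)$ forces $v=(x_1-m,x_2-m)$ to be a nonzero off-diagonal point of the box, which is a gap and hence not in $S$; while if both summands are off-diagonal they must each leave the box, a first coordinate exceeding $b$ being incompatible with $x_1\le b+1$, so both would need second coordinate $\ge a+1$, contradicting $x_2\le a+1$. This shows all $b$ points are minimal generators, giving $\e(S)=1+(b+1)+b=2b+2$.

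Finally I would bound $\n(S)$ from below. The diagonal points $(0,0),(1,1),\dots,(a,a)$ all belong to $S$, and each satisfies $m(w_1+w_2)\le a(w_1+w_2)<w_1 b+w_2 a$, so each precedes $\Fb(S)=(b,a)$; thus $\n(S)\ge a+1$. Because $\CaN(\Fb(S))+1=\n(S)+\g(S)+1$, the conjecture $\n(S)\,\e(S)\ge\CaN(\Fb(S))+1$ is equivalent to $\n(S)(2b+1)\ge\g(S)+1$, and using $\n(S)\ge a+1$ it suffices to check
$$(a+1)(2b+1)\ge\frac{(a+1)(2b-a)}{2}+1,$$
which holds since the left-hand side minus the first term on the right equals $(a+1)\frac{2b+a+2}{2}\ge1$ (note $b\ge1$ as $a<b$). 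This establishes the extended Wilf's conjecture for $S$ and all admissible orders.
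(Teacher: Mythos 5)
Your proposal is correct and follows essentially the same route as the paper: identify the minimal generating set $\{(1,1)\}\cup\{(i,0)\}_{i=b+1}^{2b+1}\cup\{(b+1,i)\}_{i=1}^{a}\cup\{(i,a+1)\}_{i=a+2}^{b+1}$, conclude $\e(S)=2b+2$, $\g(S)=\tfrac{1}{2}(1+a)(2b-a)$, $\Fb(S)=(b,a)$, and close with the bound $\n(S)\geq a+1$. The only difference is one of detail: the paper simply asserts the generating set and the reduction $\n(S)(2b+1)\geq\g(S)$, whereas you verify the indecomposability of each generator and keep the extra $+1$ in $\CaN(\Fb(S))+1$, proving a marginally stronger inequality.
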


\begin{proof}
Note that $S$ is minimally generated by
$$
\{(1,1)\}
\cup
(\cup_{i=b+1}^{2b+1}\{(i,0)\})
\cup
(\cup _{i=1}^a\{(b+1,i)\})
\cup
(\cup _{i=a+2}^{b+1}\{(i,a+1)\}).
$$
Therefore, $\e(S)=2b+2,$ and $\g(S)=\frac 1 2 (1+a) (2b-a)$.
The Frobenius element $\Fb(S)$ is the integer point $(b,a)$ for the fixed order $\prec$, and
the extended Wilf's conjecture is equivalent to $$\n(S)(2b+1)\geq \frac 1 2 (1+a) (2b-a).$$
Since $\n(S)\geq a+1$, this inequality holds.
\end{proof}

Table \ref{wilf_conj} shows some samples of the computational test for checking the extended Wilf's conjecture for several $\CaC$-semigroups using different monomial orders.
To obtain this table we have proceeded  as follows:
\begin{itemize}
	\item For a fixed cone $\CaC$, we take randomly a subsemigroup $S_1$ of genus $1$. Again, we take randomly a subsemigroup $S_2\subset S_1$ with genus $2$. Repeating this process as many times as necessary,  we get a random subsemigroup $S_i$ of $\CaC$ of  genus $i$.
	\item Now, we define a monomial order taking a nonsingular matrix $M_i$ having all the elements of its first row greater than zero. The elements of this first row are random integer from $1$ to $10$, the rest of the elements of $M_i$ are random integer from $-10$ to $10$.
	\item We check if the extended Wilf's conjecture is satisfied by the semigroups $S_1,\dots, S_i$ using the orders defined by the matrices $M_i$.
\end{itemize}

With this procedure, we have verified, using random monomial orders, that the extended Wilf's conjecture is satisfied by the  elements of a random branch of the $\CaC$-tree appearing in Table \ref{wilf_conj}.
For computing Tables \ref{tabla_minima_dimension_inmersion} and \ref{wilf_conj}, we have used an Intel i7 with 32 Gb of RAM.

\setlength\dashlinedash{0.5pt}
\setlength\dashlinegap{1.5pt}
\setlength\arrayrulewidth{0.3pt}
\begin{table}[h]
\centering
{\small
\begin{tabular}{|c|c|c|c|}
  \hline
  Initial cone $\CaC/$ & $\e(\CaC)$ & genus & Ext. Wilf's conj.\\
  Extremal rays & & & is satisfied \\ \hline
  $\N^2$ & 2 & 1 to 500 & $\surd$ \\ \hdashline
  $\N^3$ & 3 & 1 to 500 & $\surd$ \\ \hdashline
  $\N^4$ & 4 & 1 to 500 & $\surd$ \\ \hdashline
  $\N^5$ & 5 & 1 to 500 & $\surd$ \\ \hdashline
$\{(13,1)\},(1,3)\}$
& 15 & 1 to 500 & $\surd$ \\ \hdashline
$\{(3,2,0),(0,1,0),(3,5,7),$ & & & \\
$(1,8,10),(13,21,33)\}$
& 62 & 1 to  500 & $\surd$ \\ \hdashline
$\{(5,0,1,2),(0,3,1,0),$ & & & \\
$(1,1,1,0),(0,2,1,1)\}$
& 11 & 1 to 500& $\surd$ \\ \hdashline
$\{(1,2,1,2,0),(1,0,0,0,1),(1,1,0,0,1),$ & & & \\
$(2,0,2,1,1),(1,1,1,1,3)\}$
& 12 & 1 to 500& $\surd$ \\

  \hline
\end{tabular}
}
\caption{Computational test of the extended Wilf's conjecture for $\CaC$-semigroups.}\label{wilf_conj}
\end{table}

\section{Some computational results on $\CaC$-semigroups}\label{sec_comp}

For a fixed cone $\CaC,$ denote by $n_g(\CaC)$ the number of the $\CaC$-semigroups with genus $g.$ In this section, a table with some computational results is presented. We have obtained it by parallel computing in a supercomputer (\cite{super}) using in most of the computations 320 cores.
The data obtained are in Table \ref{experiments}, and we use them to dicuss the asymptotic behavior of $n_g(\CaC).$

For numerical semigroups, there exist several open problems, conjectures and results about the asymptotic behavior of $n_g(\N).$ The first conjectures appear in \cite{Bras2008}:
\begin{itemize}
\item $n_g(\N)\geq n_{g-1}(\N)+n_{g-2}(\N),$ for $g\ge 2.$
\item $\lim_{g\to \infty }{\frac{n_{g-1}(\N)+n_{g-2}(\N)}{n_g(\N)}}=1.$
\item  $\lim_{g\to \infty }{\frac{n_{g}(\N)}{n_{g-1}(\N)}}=\varphi,$ where $\varphi$ is the golden ratio.
\end{itemize}
In \cite{Zhai}, the author proves that $\lim_{g \to \infty} n_g(\N) \varphi^{-g} = L,$
where $L$ is a constant.

For $\N^p$-semigroups, it is proved (see \cite{GenSemNp}) that the sequence $n_{g}(\N^p)^{(1)}$ (where $n_{g}(\N^p)^{(1)}$ is the cardinality of the set of $\N^p$-semigroups of genus $g$ whose set of gaps is supported on the union of the coordinate axes) is asymptotic to $\binom{g+p-1}{p-1} k^p\varphi ^g$ for some $k>0.$ If $n_{g}(\CaC)^{(1)}$ denotes the cardinality of the set of $\CaC$-semigroups of genus $g$ whose set of gaps is supported on the union of the extremal rays (assume $\CaC$ has $l$ extremal rays), it is easy to prove that $n_{g}(\CaC)^{(1)}$ is asymptotic to $\binom{g+l-1}{l-1} k^p\varphi ^g$ for some $k>0.$

The data collected in Table \ref{experiments} are insufficient to propose a new conjecture.
Anyway, and in view of the similarities of these tables with the tables obtained for numerical semigroups in \cite{Bras2008}, it seems that the inequalitis $n_g(\CaC)\geq n_{g-1}(\CaC)+n_{g-2}(\CaC)$
and $n_g(\CaC)\geq n_{g-1}(\CaC)$
are fulfilled  and that
the limit of the sequence $\left\{{\frac{n_{g}(\CaC)}{n_{g-1}(\CaC)}}\right\}_{g\ge 1}$ exists.

\begin{landscape}
\begin{table}[h]
\centering
{\tiny
\begin{tabular}{|c|c|c|c|c|c|c|c|c|c|c|c|c|}
  \hline
  \multicolumn{1}{|c} {\text{Cone/}} &  \multicolumn{3}{|c|}{$\N^2$} & \multicolumn{3}{|c|}{$\N^3$} & \multicolumn{3}{|c|}{$\langle (3,1),(1,2) \rangle_{\Q^+}$} & \multicolumn{3}{|c|}{$\langle (1, 1, 0), (0, 1, 0)$} \\
  \text{Extremal rays} & \multicolumn{3}{|c|}{} & \multicolumn{3}{|c|}{}  & \multicolumn{3}{|c|}{} & \multicolumn{3}{|c|}{$(1, 1, 1), (0, 1, 1), (3, 2, 1) \rangle_{\Q^+}$} \\
  \hdashline
  \text{genus}& $n_g$ & $\frac{n_g}{n_{g-1}}$ & $\frac{n_{g+1}+n_{g-1}}{n_{g}}$ & $n_g$ & $\frac{n_g}{n_{g-1}}$ & $\frac{n_{g+1}+n_{g-1}}{n_{g}}$ & $n_g$ & $\frac{n_g}{n_{g-1}}$ & $\frac{n_{g+1}+n_{g-1}}{n_{g}}$ & $n_g$ & $\frac{n_g}{n_{g-1}}$ & $\frac{n_{g+1}+n_{g-1}}{n_{g}}$ \\ \hline
  0 & 1 & & & 1 & & & 1 & & & 1 & & \\ \hdashline
  1 & 2 & 2 & & 3 & 3 & & 4 & 4 & & 5 & 5 &  \\ \hdashline

  2 & 7 & 3.5 & 0.428571 &
  15 & 5 & 0.266667 &
  17 & 4.25 & 0.294118 &
  32 & 6.4 & 0.1875
  \\ \hdashline

  3 & 23 & 3.28571 & 0.391304 &
  67 & 4.46667 & 0.268657 &
  63 & 3.70588 & 0.333333 &
  179 & 5.59375 & 0.206704
  \\ \hdashline

  4 & 71 & 3.08696 & 0.422535 &
  292 & 4.35821 & 0.280822 &
  236 & 3.74603 & 0.338983 &
  960 & 5.36313 & 0.219792
  \\ \hdashline

  5 & 210 & 2.95775 & 0.447619 &
  1215 & 4.16096 & 0.295473 &
  838 & 3.55085 & 0.356802 &
  4951 & 5.15729 & 0.230055
  \\ \hdashline

  6 & 638 & 3.0381 & 0.440439 &
  5075 & 4.17695 & 0.296946 &
  2896 & 3.45585 & 0.370856 &
  25049 & 5.05938 & 0.235977
  \\ \hdashline

  7 & 1894 & 2.96865 & 0.44773 &
  20936 & 4.12532 & 0.300439 &
  9764 & 3.37155 & 0.382425 &
  124395 & 4.96607 & 0.241167
  \\ \hdashline

  8 & 5570 & 2.94087 & 0.454578 &
  85842 & 4.10021 & 0.30301 &
  32381 & 3.31637 & 0.39097 &
  608825 & 4.89429 & 0.245463
  \\ \hdashline

  9 & 16220 & 2.91203 & 0.460173 &
  349731 & 4.07412 & 0.305315 &
  106060 & 3.27538 & 0.397369 &
  2943471 & 4.83467 & 0.2491
  \\ \hdashline

  10 & 46898 & 2.89137 & 0.464625 &
  1418323 & 4.05547 & 0.307104 &
  343750 & 3.24109 & 0.402737 &
  14084793 & 4.7851 & 0.252208
  \\ \hdashline

  11 & 134856 & 2.87552 & 0.46804 &
  5731710 & 4.04119 & 0.308469 &
  1103235 & 3.20941 & 0.407719 &
  66814010 & 4.7437 & 0.254861
  \\ \hdashline

  12 & 386354 & 2.86494 & 2.86494 &
  23100916 & 4.03037 & 0.309513 &
  3509368 & 3.18098 & 0.412321 &
  ... & ... & ... \\ \hdashline

  13 & 1102980 & 2.85484 & 0.472547 &
  92882954 & 4.02075 & 0.310419 &
  11075932 & 3.1561 & 0.416453 &
  ... & ... & ... \\ \hdashline

  14 & 3137592 & 2.84465 & 0.474674 &
  ... & ... & ... &
  34719935 & 3.13472 & 0.420084 &
  ... & ... & ... \\ \hdashline

  15 & 8892740 & 2.83426 & 0.476858 &
  ... & ... & ... &
  108185393 & 3.11594 & 0.423309 &
  ... & ... & ... \\ \hdashline

  16 & 25114649 & 2.82417 & 0.479017 &
  ... & ... & ... & ... & ... & ... & ... & ... & ...\\ \hdashline
  17 & 70686370 & 2.81455 & 0.481102 &
   ... & ... & ... & ... & ... & ... & ... & ... & ...\\ \hdashline
  18 & 196981655 & 2.7867 & 0.486345 &
   ... & ... & ... & ... & ... & ... & ... & ... & ...\\
  \hline
\end{tabular}
}
\caption{Computational experiments on $n_g(\CaC).$}\label{experiments}
\end{table}
\end{landscape}

\noindent
{\bf Acknowledgements.} 
The authors would like to thank Shalom Eliahou for his helpful comments and suggestions related to this work.


\begin{thebibliography}{20}


\bibitem{latte}
\textsc{Baldoni, V.; Berline, N.; De Loera, J.A.; Dutra, B.; K\"{o}ppe, M.; Moreinis, S.; Pinto, G.; Vergne, M.; Wu,  J.},
\newblock A User's Guide for LattE integrale v1.7.2, 2013, software package LattE is available at
\url{http://www.math.ucdavis.edu/~latte/}

\bibitem{Bras2008}
\textsc{Bras-Amor\'{o}s, M.},
\newblock \emph{Fibonacci-like behavior of the number of numerical semigroups of a given genus.}
\newblock Semigroup Forum 76 (2008), no. 2, 379--384.

\bibitem{normaliz}
\textsc{Bruns W.; Ichim B.; R\"{o}mer T.; Sieg R.; S\"{o}ger C.},
\newblock \emph{Normaliz. Algorithms for rational cones and affine monoids.}
\newblock Available at \url{https://www.normaliz.uni-osnabrueck.de}.

\bibitem{CoxLO}
\textsc{Cox, D. A.; Little, J.; O'Shea, D.},
\newblock Ideals, varieties, and algorithms.
An introduction to computational algebraic geometry and commutative algebra.
\newblock \emph{Undergraduate Texts in Mathematics. Springer, Cham, 2015.}

\bibitem{GenSemNp}
\textsc{Failla G.; Peterson C.; Utano, R.},
\newblock \emph{Algorithms and basic asymptotics for generalized numerical semigroups in $\N^p$.}
\newblock  Semigroup Forum (2016) 92, 460--473.

\bibitem{Fromentin}
\textsc{Fromentin, J.;  Florent H.},
\newblock \emph{Exploring the tree of numerical semigroups.}
\newblock   Math. Comp. 85 (2016), no. 301, 2553--2568.

\bibitem{Kaplan}
\textsc{Kaplan, N.},
\newblock \emph{Counting numerical semigroups by genus and some cases of a question of Wilf.}
\newblock J. Pure Appl. Algebra 216 (2012), no. 5, 1016--1032.

\bibitem{mpi}
\textsc{MPICH $|$ High-Perfomance Portable MPI.}
\newblock \url{http://www.mpich.org}

\bibitem{Moscariello}
\textsc{Moscariello, A.; Sammartano, A.},
\newblock \emph{On a conjecture by Wilf about the Frobenius number.}
\newblock  Math. Z. 280 (2015), no. 1-2, 47--53.

\bibitem{python}
\textsc{Python Software Foundation.}
\newblock Python Language Reference, version 3.5. Available at \url{http://www.python.org}.

\bibitem{libro_rosales}
\textsc{Rosales, J. C.; Garc\'{\i}a-S\'{a}nchez, P. A.},
\newblock Numerical semigroups.
\newblock \emph{ Developments in Mathematics, 20. Springer, New York, 2009.}

\bibitem{super}
\textsc{UCA Supercomputer Service.}
\newblock \url{http://supercomputacion.uca.es/}



\bibitem{Zhai}
\textsc{Zhai, A.},
\newblock \emph{Fibonacci-like growth of numerical semigroups of a given genus.}
\newblock Semigroup Forum 86 (2013), no. 3, 634--662.

\bibitem{Wilf}
\textsc{Wilf, H.S.},
\newblock \emph{A circle-of-lights algorithm for the "money-changing problem''.}
\newblock  Amer. Math. Monthly 85 (1978), no. 7, 562--565.

\bibitem{mathematica}
\textsc{Wolfram Research, Inc.}
\newblock Mathematica, Versi\'{o}n 10.0, Champaign, IL (2014).

\end{thebibliography}
\end{document}